\def\loadTIKZ{\usepackage{tikz}\usetikzlibrary{matrix,arrows,calc,cd,decorations.pathmorphing}}\ifdefined\headpresent\else \def\headpresent{}\overfullrule=5pt \newcommand\hfuzzReset{\hfuzz=3pt}\hfuzzReset \newcommand\toleranceReset{\tolerance=1400}\toleranceReset \newcommand\emergencystretchReset{\emergencystretch=1ex}\emergencystretchReset \hbadness=10000 \usepackage{ifpdf}\newcommand{\addressTressl}{The University of Manchester, School of Mathematics, Oxford Road, Manchester M13 9PL, UK}\newcommand{\emailTressl}{marcus.tressl@manchester.ac.uk}\newcommand{\homepageTressl}{\url{http://personalpages.manchester.ac.uk/staff/Marcus.Tressl/index.php}}\setcounter{tocdepth}{2}\let\oldtocsection=\tocsection \let\oldtocsubsection=\tocsubsection \let\oldtocsubsubsection=\tocsubsubsection \renewcommand{\tocsection}[2]{\hspace{0em}\vspace*{0.1em}\oldtocsection{#1}{#2}}\renewcommand{\tocsubsection}[2]{\hspace{4ex}\oldtocsubsection{#1}{#2}}\renewcommand{\tocsubsubsection}[2]{\hspace{6ex}\oldtocsubsubsection{#1}{#2}}\ifpdf \usepackage[pdftex]{lscape}\else \usepackage{lscape}\fi \usepackage{ulem}\usepackage{fancybox}\usepackage{xifthen}\usepackage{forarray}\usepackage{xstring}\usepackage{stringstrings}\def\StackCreate#1#2#3{  \expandafter\def\csname#1\endcsname{#3}  \expandafter\def\csname#1Push\endcsname##1{\expandafter\edef\csname#1\endcsname{##1#2\csname#1\endcsname}}  \expandafter\def\csname TopAux#1\endcsname ##1#2##2#3{##1}  \expandafter\def\csname#1Top\endcsname{\expandafter\expandafter\expandafter\expandafter\expandafter\expandafter\csname TopAux#1\endcsname\csname#1\endcsname}  \expandafter\def\csname PopAux#1\endcsname ##1#2##2#3##3#2{\expandafter\def\csname##3\endcsname{##2#3}}  \expandafter\def\csname#1Pop\endcsname{\expandafter\expandafter\expandafter\expandafter\expandafter\expandafter\csname PopAux#1\endcsname\csname#1\endcsname#1#2}}\def\GetAfterColonAux#1:#2;{#2}\def\GetAfterColon#1{\IfBeginWith{#1}{:}{\GetAfterColonAux#1;}{#1}}\usepackage[shortlabels,inline]{enumitem}\setenumerate[1]{leftmargin=5.5ex}\setitemize[1]{leftmargin=5.5ex}\SetEnumitemKey{noindent}{leftmargin=0ex, itemindent=5ex, align=right, itemsep=1ex }\newcommand\NOPAGENUMBER[1]{}\usepackage{everypage}\newcommand\AddPrivateToMargin[1]{\AddEverypageHook{\tikz[overlay,remember picture]{    \node at ($(current page.west)+(1.5,0)$) [rotate=90] {\textcolor{orange}{\vbox{\hrule width \the\textwidth height 0.5pt} \textcolor{black}{#1}\ \vbox{\hrule width 40em height 0.5pt}}}; }}}\newcommand\AddLongversionToMargin[1]{\AddEverypageHook{\tikz[overlay,remember picture]{    \node at ($(current page.west)+(2,0)$) [rotate=90] {\textcolor{\LongColor}{\vbox{\hrule width  \the\textwidth height 0.5pt} \textcolor{black}{#1}\ \vbox{\hrule width 40em height 0.5pt}}}; }}}\newcommand\AddOldversionToMargin[1]{\AddEverypageHook{\tikz[overlay,remember picture]{    \node at ($(current page.west)+(2.5,0)$) [rotate=90] {\textcolor{\OldColor}{\vbox{\hrule width \the\textwidth height 0.5pt} \textcolor{black}{#1}\ \vbox{\hrule width 40em height 0.5pt}}}; }}}\IfFileExists{mathabx.sty}{}{}\usepackage{amsfonts}\usepackage{amssymb}\usepackage{stmaryrd}\usepackage{amsmath}\usepackage{amsthm}\usepackage{dsfont}\IfFileExists{mbboard.sty}{\usepackage{mbboard}}{}\usepackage{mathrsfs}\usepackage{twcal}\usepackage[latin1]{inputenc}\usepackage{accents}\usepackage{bm}\ifpdf \usepackage[pdftex,usenames,x11names]{xcolor}\else \usepackage[dvips,usenames,x11names]{xcolor}\fi \usepackage[pdftex]{graphicx}\usepackage[all]{xy}\ifdefined\loadTIKZ   \loadTIKZ \def\TIKZlabel#1{}\else\fi \StackCreate{ColoR}{;}{?}\ColoRPush{black}  \newcommand {\notion}[2][]{  \def\temp{#1}  \ifmmode     #2     \ifx \temp\empty     \index{$#2$}    \else     \index{$#1$}    \fi   \else {\bf #2}    \ifx \temp\empty     \index{#2}    \else     \index{#1}    \fi   \fi   }\ifpdf \usepackage[pdftex,linktocpage,pagebackref,breaklinks]{hyperref}\else \usepackage[hypertex,linktocpage,pagebackref]{hyperref}\fi \usepackage{xr-hyper}\newcommand{\refX}[2]{\IfBeginWith{#1}{:}{\ref{\GetAfterColonAux#1;-#2}}{\cite[\ref{#1-#2}]{#1}}}\fi \newcommand\pr{\begin{proof}}\def\ende{\end{proof}}\newtheoremstyle{LayoutVoid}  {1ex}  {0ex}  {\normalfont}  {}  {\bf}  {.}  {1ex}  {}\newcommand\stressstatement[1]{#1}\theoremstyle{plain}\swapnumbers \newtheorem{theorem}{Theorem}[section]\newcommand\maketheorem[1]{   \newtheorem{#1}[theorem]{\stressstatement{#1}}    \newtheorem{#1Definition}[theorem]{\stressstatement{#1 and Definition}}   }\FunctionForEach{,}{\maketheorem}{Conclusion,Conjecture,Corollary,Fact,Facts,Lemma,Observation,Observations,Proposition,Reminder,Scholium,Summary,Theorem}\theoremstyle{definition}\theoremstyle{remark}\FunctionForEach{,}{\maketheorem}{Convention,Counterexample,Discussion,Example,Examples, Exercise,Exercises,Explanation,Notation,Project,Projects,Question,Questions,Remark,Remarks,Strategy,Warning}\theoremstyle{LayoutVoid}\numberwithin{equation}{section}\newcommand{\labelon}[1]{\marginpar{#1}}\newcommand{\labelx}[1]{{\def\temp{#1}\ifx\temp\empty\else \label{#1}\labelon{#1}\fi}}\def\GetAfterColon#1:#2;;{#2}\def\GetAfterPlus#1+#2;;{#2}\newenvironment{FACT}[2]    {     \IfBeginWith{#1}{:}        {\def\tempFactName{void}\def\tempFreeTitle{\GetAfterColon#1;;\ }}        {            \IfBeginWith{#1}{+}              {\def\tempFactName{voidTheorem}\def\tempFreeTitle{\GetAfterPlus#1;;\ }}              {\def\tempFactName{#1}\def\tempFreeTitle{}}        }     \def\tempfacT{\end{\tempFactName}}    \begin{\tempFactName}\labelx{#2}\textup{\textbf{\tempFreeTitle}}    \capitalize[q]{#1}        \caselower[q]{#1}    \global\edef\factname{\thestring}    }    {\tempfacT}\catcode`\=13 \def{+}\newcommand\assigncharacter[1]{\expandafter\newcommand\csname #1\endcsname{\mathds{#1}}}\FunctionForEach{,}{\assigncharacter}{A,B,C,D,E,F,G,I,J,K,M,N,Q,R,T,U,V,W,X,Y,Z}\renewcommand\assigncharacter[1]{\expandafter\newcommand\csname C#1\endcsname{\mathcal{#1}}}\FunctionForEach{,}{\assigncharacter}{A,B,C,D,E,F,G,H,I,J,K,L,M,N,O,P,Q,R,S,T,U,V,W,X,Y,Z}\renewcommand\assigncharacter[1]{\expandafter\newcommand\csname D#1\endcsname{\mathfrak{#1}}}\FunctionForEach{,}{\assigncharacter}{a,b,c,d,e,f,g,h,i,j,k,l,m,n,o,p,q,r,s,t,u,v,w,x,y,z,A,B,C,D,E,F,G,I,K,L,M,N,O,P,Q,R,S,T,U,V,W,X,Y,Z} \renewcommand{\SS}{\mathscr{S}}\renewcommand\assigncharacter[1]{\expandafter\newcommand\csname S#1\endcsname{\mathscr{#1}}}\FunctionForEach{,}{\assigncharacter}{A,B,C,D,E,F,G,H,I,J,K,L,M,N,O,P,Q,R,T,U,V,W,X,Y,Z}\def\NewFont#1#2#3#4#5{\expandafter\font\csname #1display\endcsname =#1 at #2 \expandafter\font\csname #1normal\endcsname =#1 at #3 \expandafter\font\csname #1script\endcsname =#1 at #4 \expandafter\font\csname #1scriptscript\endcsname =#1 at #5 }\def\NewFontLetter#1#2{{\mathchoice {{\expandafter\hbox{\csname #1display\endcsname\char"#2}}}{{\expandafter\hbox{\csname #1normal\endcsname\char"#2}}}{{\expandafter\hbox{\csname #1script\endcsname\char"#2}}}{{\expandafter\hbox{\csname #1scriptscript\endcsname\char"#2}}}}}\NewFont{pxsyc}{9.00pt}{8.00pt}{7.00pt}{6.00pt}\NewFont{pxsya}{9.00pt}{8.00pt}{7.00pt}{6.00pt}\NewFont{p1xr}{10.00pt}{9.00pt}{8.00pt}{7.00pt}\NewFont{MnSymbolC10}{10.00pt}{9.00pt}{8.00pt}{7.00pt}\NewFont{MnSymbolD10}{12.00pt}{11.00pt}{10.00pt}{9.00pt}\NewFont{MnSymbolF10}{12.00pt}{11.00pt}{10.00pt}{9.00pt}\NewFont{manfnt}{12.00pt}{11.00pt}{10.00pt}{9.00pt}\NewFont{favmr7y}{12.00pt}{11.00pt}{10.00pt}{9.00pt}\newcommand\bdl{{\ifmmode \mathrm{bdlat}\else {bounded distributive lattice}\fi}} \newcommand{\st}{{\ \vert \ }}\let\temp\phi \let\phi\varphi \let \varphi\temp \let\temp\theta \let\theta\vartheta \let \vartheta\temp \let\eps\varepsilon  \let\0\emptyset \newcommand{\into}{\hookrightarrow}\newcommand{\onto}{\twoheadrightarrow}\newcommand{\lra}{\longrightarrow}\newcommand{\Ra}{\Rightarrow}\newcommand{\alg}{\mathrm{alg}}\newcommand{\mal}{\cdot}\newcommand{\monthname}[1]{\ifcase#1 \or January \or February \or March \or April \or May \or June \or July \or August \or September \or October \or November \or December \fi}\newcommand\LongColor{teal}\newcommand\OldColor{gray}\newcommand\LongStart{\ColoRPush{\LongColor}\color{\ColoRTop}[BEGIN LONG VERSION]}\newcommand\LongEnd{[END LONG VERSION]\ColoRPop\color{\ColoRTop}}\newcommand\OldStart{\ColoRPush{\OldColor}\color{\ColoRTop}[BEGIN OLD VERSION]}\newcommand\OldEnd{[END OLD VERSION]\ColoRPop\color{\ColoRTop}}\newcommand\kat[1]{{\tt #1}}\newcommand{\claim}{\underline{Claim.}\ }\newcommand{\Claim}[1]{\underline{Claim #1.}\ }\newcommand\map[5]{\begin{eqnarray*}  #1#2&\lra &#3 \\   #4&\longmapsto &#5 \end{eqnarray*}} \renewcommand{\mod}{{\operator{\,mod\,}}}\newcommand\operator[1]{\mathop{\operatorname{#1}}\nolimits} \newcommand{\id}{\operator{id}}\newcommand{\Der}{\operator{Der}}\newcommand{\CODF}{{\rm CODF}}\newcommand{\Th}{\operator{Th}}\newcommand{\card}{\operator{card}}\newcommand{\qf}{\operator{qf}}\newcommand{\Hom}{\operator{Hom}}\newcommand{\trdeg}{\operator{tr.deg}}\newcommand{\Jet}{\operator{Jet}}\definecolor{Green}{rgb}{0.00,0.50,0.00}\definecolor{grey}{rgb}{0.40,0.40,0.40} \renewcommand\textcolor[2]{\ColoRPush{#1}\color{\ColoRTop}#2\ColoRPop\color{\ColoRTop}}\IfFileExists{C:/wb/System64/WinBatch.exe}{}{}\ifdefined\isinput\endinput\else\fi \let\temp\theta \let\theta\vartheta \let \vartheta\temp \def\uc{{\rm UC} }\def\Ind{\setbox0=\hbox{$x$}\kern\wd0\hbox to 0pt{\hss$\mid$\hss} \lower.9\ht0\hbox to 0pt{\hss$\smile$\hss}\kern\wd0}\def\Notind{\setbox0=\hbox{$x$}\kern\wd0\hbox to 0pt{\mathchardef \nn=12854\hss$\nn$\kern1.4\wd0\hss}\hbox to 0pt{\hss$\mid$\hss}\lower.9\ht0 \hbox to 0pt{\hss$\smile$\hss}\kern\wd0}\def \U {\mathcal U}\def \DCF {\operatorname{DCF}}\def \UC {\operatorname{UC}}\def \jet {\operatorname{Jet}}\def \alg {\operatorname{alg}}\def \pol {\operatorname{pol}}\def \SS {\mathcal S}\def \V{\mathcal V}\renewcommand{\labelon}[1]{}\newif\ifprivate\privatefalse \newif\iflongversion\longversionfalse \newif\ifoldversion\oldversionfalse \renewcommand\LongStart{\ColoRPush{\LongColor}\color{\ColoRTop}}\renewcommand\LongEnd{\ColoRPop\color{\ColoRTop}}\ifprivate \AddPrivateToMargin{private version} \fi \iflongversion \AddLongversionToMargin{long version} \fi \ifoldversion \AddOldversionToMargin{old version included} \fi 
\begin{document}\title{Differential Weil Descent and Differentially Large Fields}\author{Omar Le\'on S\'anchez}\address{\addressTressl}\email{omar.sanchez@manchester.ac.uk}\author{Marcus Tressl}\address{\addressTressl\newline Homepage: \homepageTressl}\email{\emailTressl}\date{\today}\subjclass[2010]{Primary: 12H05 and 14A99, Secondary: 03C60}\keywords{differential fields, Weil descent, large fields, model theory}\begin{abstract}A differential version of the classical Weil descent is established in all characteristics. It yields a theory of differential restriction of scalars for differential varieties over finite differential field extensions. This theory is then used to prove that in characteristic 0, \textit{differential largeness} (a notion introduced here as an analogue to largeness of fields) is preserved under algebraic extensions. This provides many new differential fields with minimal differential closures. A further application is Kolchin-density of rational points in differential algebraic groups defined over differentially large fields. \end{abstract}\maketitle \tableofcontents \section{Introduction}\noindent In 1959, Andr\'e Weil introduced his method of restricting scalars for a finite separable field extension $K\subseteq L$, cf. \cite[\S1.3]{Weil1982}. It says that scalar extension, seen as a functor from $K$-algebras to $L$-algebras, has a left adjoint, which sends an $L$-algebra $D$ to a $K$-algebra $W(D)$, the Weil descent (aka Weil restriction) of $D$ from $L$ to $K$. The construction has been vastly generalised by Grothendieck, see \cite{Grothe1995a}. \ \par We establish a similar descent for differential algebras with respect to a given extension of differential rings $A\subseteq B$, where $B$ is finitely generated and free as an $A$-module. Here a differential ring $A$ is a commutative unital ring equipped with a finite set of derivations $A\lra A$. If $D$ is a differential $B$-algebra with commuting derivations, its descent $W^\mathrm{diff}(D)$ is a differential $A$-algebra in commuting derivations, see \ref{DiffWeilDescent}. This is deduced from our first main result, which concerns rings and algebras with a single derivation: \ \par \medskip\noindent \textbf{Theorem A.} (see \ref{partialDW} and \ref{liebracket})\ \par \noindent Let $d:A\lra A$ be a derivation of a ring $A$ and let $(B,\delta)$ be a differential $(A,d)$-algebra. Assume that $B$ is finitely generated and free as an $A$-module. \begin{enumerate}[(i),itemsep=1ex]\item Let $(D,\partial)$ be a differential $(B,\delta)$-algebra. Then there is a unique derivation $\partial^W$ on the classical Weil descent $W(D)$ such that $(W(D),\partial^W)$ is a differential $(A,d)$-algebra and the unit of the adjunction at $D$ (given by the classical Weil descent), namely the map $W_D:D\lra W(D)\otimes B$, is a differential $(B,\delta)$-algebra homomorphism $ (D,\partial)\lra (W(D)\otimes B,\partial^W\otimes \delta). $ \item If $B$ is a subring of $D$ and the inclusion is the structure morphism of $D$ as a $B$-algebra, then the assignment $\partial\mapsto \partial^W$ is a Lie-ring and an $A$-module homomorphism. \end{enumerate}\ \par \smallskip\noindent We expose several applications of the differential Weil descent. A major one addresses a method to produce differential fields (in commuting derivations and of characteristic 0), which possess a minimal differential closure (or in Kolchin's terminology, \textit{constraint closure}). First examples of such fields were given by Singer in \cite{Singer1978b}. He showed that for every closed ordered differential field $K$ in one derivation (cf. \cite{Singer1978a}), the algebraic closure $K[i]$ is differentially closed and minimal over $K$. The only other known examples are fixed fields of models of $\DCF_{0,m}\operatorname{A}$ the theory differentially closed fields with a generic differential automorphism, see \cite{LeoSan2016}. \ \par A key notion in this task is \textit{differentially large  field}, introduced here in \S\ref{sectionUCalgebraically}. Recall from \cite{Pop1996} that a field is \textit{large} if it is existentially closed in its Laurent series field $K((t))$; equivalently, if every smooth curve defined over $K$ with a $K$-rational point, has infinitely many $K$-rational points. Basic examples of large fields are local fields and fraction fields of local henselian rings, like the quotient field of the power series ring $K[[T_1,\ldots,T_n]]$ for any field $K$. Large fields make a remarkable appearance in the work of Pop \cite{Pop1996}, where he studies finite split embedding problems for the absolute Galois groups of a large field; for instance, he shows that if $K$ is large then every finite group is regularly realizable as a Galois group over $K(t)$. \ \par Our cue to define  differentially large fields comes from the Uniform Companion theory $\uc_m$ of differential fields of characteristic zero with $m$ commuting derivations, introduced in \cite{Tressl2005}. We fix a set $\Delta=\{\delta_1,\dots,\delta_m\}$ of commuting derivations and assume that our fields have characteristic zero. We say that a differential field $K=(K,\Delta)$ is differentially large if $K$ is large as a field and is a model of the theory $\uc_m$. The theory $\uc_m$ has rather elaborate axioms which involve somewhat deep differential-algebraic terminology, such as autoreduced and characteristic sets (see Section~\ref{prelimconven}). However, in Theorems \ref{charac} and \ref{UCalgebraically} below, we give several more practical and rather accessible characterisations of differential largeness. For instance, we prove that a large and differential field $K$ is differentially large if and only if it is existentially closed in every differential field extension $L$, provided $K$ is existentially closed in $L$ as a field. Concrete examples of differentially large fields for which our results are novel contain differentially large fields expanding real closed fields (denoted by $\CODF_m$), p-adically closed fields or pseudo-finite fields of characteristic 0, see \cite[\S8]{Tressl2005} for details. A more general mechanism on how to produce differentially large field is given in \ref{LargeElemExtension}. \ \par \iflongversion\LongStart \textcolor{red}{I don't like this; maybe omit or leave a loose remark in a footnote: One could potentially give a definition of differential largeness closer to the algebraic definition; that is, in terms of smooth differential varieties. However, to our knowledge the theory of smoothness has not been systematically developed in differential-algebraic geometry. Hence, we do not pursue this approach.}\LongEnd\else\fi \ \par We give another more geometric characterisation of differential largeness in terms of smooth points of certain algebraic varieties associated to differential varieties -- the so-called ``jets''--, see Theorem \ref{charac} part (iii). In \S \ref{axiomsuc} we give a further characterisation, an algebraic-geometric one in the spirit of the Pierce-Pillay axioms for ordinary differentially closed fields \cite{PiePil1998}, see \ref{DiffLargeGeomAx}. \iflongversion\LongStart This latter can be expressed with first-order axioms in the language of differential rings, and so it shows that the class of differentially large fields is an elementary class. \LongEnd\else\fi \ \par In analogy to the algebraic case, where it is known that algebraic extensions of large fields are again large (see \cite[Proposition 1.2]{Pop1996}), we will be using the differential Weil descent and results from \S \ref{sectionUCalgebraically} to prove: \ \par \smallskip\noindent \textbf{Theorem B.} (cf. \ref{algextdifflarge}) Every algebraic extension of a differentially large field is again differentially large (with the uniquely induced derivations). \ \par \smallskip  It follows quickly that the algebraic closure of a differentially large field is in fact differentially closed.  This is used in a forthcoming paper by Aschenbrenner, Chernikov et al. to show that the theory of  differentially closed fields has a distal expansion.  As a further consequence, differentially large fields have \textit{minimal} differential closures. Finally, Theorem B together with results in \S\ref{sectionUCalgebraically}will be used in Theorem~\ref{ongroups} to show that the $K$-rational points of a connected differential algebraic group $G$ defined over a differentially large field $K$ are Kolchin-dense in $G$. \ \par \smallskip\noindent \textbf{Outlook.}We expect many more applications of the differential Weil descent in connection with differentially large fields. For instance, we expect that differentially large fields will make a notable appearance in differential Galois cohomology and the parameterised Picard-Vessiot theory for linear differential equations. This will potentially be in the form of finiteness results for cohomology groups of linear differential algebraic groups over differentially large fields with some additional properties (e.g., a differential analogue of Serre's property (F), see \cite[Ch. III, \S4]{Serre1994} used in the classical finiteness theorems for linear algebraic groups). \section{Classical Weil Descent for Algebras}\labelx{classicalweil}\labelx{ClassicalWeil}\noindent In this section we review the classical construction of Weil descent of scalars for algebras, see for example \cite[\S7.6]{BoLuRa1990}, \cite[\S2]{MooSca2010} and \cite{Grothe1995a}. For our purposes we need certain explicit formulas, so we give details. \ \par \smallskip\noindent \textbf{Convention.}Throughout, we assume our rings and algebras to be commutative and unital; ring and algebra homomorphisms are meant to be unital as well. \ \par \smallskip Let $A$ be a ring and let $B$ be an $A$-algebra. For each $A$-algebra $C$, the scalar extension by $B$ is the $B$-algebra $C\otimes_AB$ with structure map $b\mapsto 1\otimes b$ \footnote{As a general reference for tensor products, specifically in the category of algebras we refer to \cite[Appendix A]{Matsum1989}}. This assignment has a natural extension to a covariant functor $F:A\text{-}\kat{Alg}\lra B\text{-}\kat{Alg}$. The functor $F$ has a right adjoint $B\text{-}\kat{Alg}\lra A\text{-}\kat{Alg}$ given by restricting scalars. If $B$ is finitely generated and free as an $A$-module, then $F$ also has a left adjoint $W$, called \notion{Weil descent}, or \notion{Weil restriction}. We start with a reminder on left adjoints in general, ready made for use later on. \begin{FACT}{:Fact.}{LeftAdjoint} \textup{\cite[Thm 2, p.83, Cor. 1,2, p.84]{MacLan1998}}\noindent Let $F:\CC\lra \CD$ be a covariant functor between categories $\CC$ and $\CD$. \begin{enumerate}[(i)]\item The following are equivalent. \begin{enumerate}\item\labelx{LeftAdjointLA}$F$ has a left adjoint $W$, i.e., $W:\CD\lra \CC$ is a covariant functor such that for all $D\in \CD$ the functor $\Hom_\CD(D,F(\ \underline {\ }\ )):\CC\lra \kat{Sets}$ is represented by $W(D)$ meaning that the functors $\Hom_{{\CC}}({W}(D),\ \underline {\ }\ )$ and $\Hom_\CD(D,F(\ \underline {\ }\ ))$ are isomorphic \footnote{Recall that two functors are isomorphic if there is an invertible natural transformation between them.}. \item\labelx{LeftAdjointDagger}For each $D\in \CD$ there are $W(D)\in \CC$ and a $\CD$-morphism $W_D:D\lra F(W(D))$ such that the following condition holds: \ \par \smallskip \begin{enumerate}\item[$(\dagger)$:]For every $C\in\CC$ and each morphism $f:D\lra F(C)$, there is a unique $\CC$-morphism $g:{W}(D)\lra C$ such that the following diagram commutes \begin{center}\begin{tikzcd}  F(W(D)) \ar[r, dashed, "F(g)"] & F(C)\\   D \ar[u, "W_D"] \ar[ru,"f"']\end{tikzcd}\end{center}\noindent In other words, $W_D$ gives rise to a bijection \[\tau(D,C):\Hom_{{\CC}}({W}(D),C)\lra \Hom_{{\CD}}(D,{F}(C)),\ g\longmapsto {F}(g)\circ {W}_D. \]\end{enumerate}\end{enumerate}\end{enumerate}\begin{enumerate}[(i),resume]\item\labelx{LeftAdjointLAimpliesDagger}If \ref{LeftAdjointLA}  holds, then for every such functor $W$,  all $D\in\CD$ and each isomorphism $\tau(D,\ \underline{\ }\ ) :\Hom_{{\CC}}({W}(D),\ \underline {\ }\ )\lra \Hom_{\CD}(D,F(\ \underline {\ }\ ))$ as in \ref{LeftAdjointLA}, the choice $W(D)$ and $W_D=\tau(D,W(D))(\id_{W(D)})$ satisfy property $(\dagger)$ of \ref{LeftAdjointDagger}. \ \par The assignment $D\mapsto W_D$ is a natural transformation $\id_{\CD}\lra F\circ W$ and is called the \notion{unit of the adjunction}; $W_D$ is called the \notion{component} at $D$ of that unit. \ \par Similarly, for each $C\in \CC$ the morphism $F_C:W(F(C))\lra C$ that is sent to $\id_{F(C)}$ via $\tau(F(C),C)$ gives rise to a natural transformation $W\circ F\lra \id_{\CC}$, called the \notion{counit of the adjunction}; $F_C$ is called the \notion{component} at $C$ of that counit. \item\labelx{LeftAdjointDaggerImpliesLA}If \ref{LeftAdjointDagger} holds, then for any choice of objects $W(D)$ and morphisms $W_D$ as in \ref{LeftAdjointDagger}, for $D\in\CD$, the assignment $D\mapsto W(D)$ can be extended to a functor $W:\CD\lra\CC$ satisfying \ref{LeftAdjointLA} as follows: Take a morphism $f_0:D\lra D'$ and set $f:=W_{D'}\circ f_0$ and $C:=W(D')$. Then define $W(f_0):W(D)\lra W(D')$ as the unique $\CC$-morphism $W(D)\lra W(D')$ such that the diagram \begin{center}\begin{tikzcd}[row sep=7ex,column sep=10ex]  F(W(D)) \ar[r, dashed, "F(W(f_0))"] & F(C)=F(W(D'))\\   D \ar[u, "W_D"]\ar[r,"f_0"'] \ar[ru,"f"'] & D'\ar[u, "W_{D'}"]\end{tikzcd}\end{center}commutes, according to $(\dagger)$. \item\labelx{LeftAdjointUnique}Any two functors that are left adjoint to $F$ are isomorphic. \item\labelx{LeftAdjointExact}If $W$ is left adjoint to $F$, then $W$ preserves all co-limits, cf. \cite[p. 119, last paragraph]{MacLan1998}. For example $W$ preserves direct limits and fibre sums (aka pushouts). \iflongversion\LongStart We might need this: If $F$ has a left adjoint and a right adjoint, then $F$ is an exact functor, i.e. it maps short exact sequences to short exact sequences. However, first clarify what is "exact" in a general category! \LongEnd\else\fi \ \par \end{enumerate}\end{FACT}\begin{FACT}{:Notation and setup.}{NotationSetup}We return to our setup of a ring $A$ and an $A$-algebra $B$. Let \[F:A\text{-}\kat{Alg}\lra B\text{-}\kat{Alg}\]be the functor defined by $F(C)=C\otimes B$ and for $\phi:C\lra C'$, $F(\phi)=\phi\otimes \id_B$. Here and below, tensor products are taken over $A$, unless stated otherwise. \ \par \smallskip\noindent We will from now on assume that $B$ is free and finitely generated as an $A$-module of dimension $\ell$ over $A$. We also fix generators $b_1,\ldots,b_\ell $ of the $A$-module $B$. \iflongversion\LongStart \ldots with $1=\sum a_ib_i$. \ldots and $a_1,\ldots,a_\ell \in A$ If necessary later we can also impose $b_1+\ldots+b_\ell =1$ (and hence $a_i=1$) here. To get such generators, choose any generators $1,b,b_3,\ldots,b_\ell $ and set $b_1=-b-b_3-\ldots-b_\ell , b_2=b+1$. \LongEnd\else\fi For $i\in \{ 1,...,\ell \} $ let \[\lambda_i:B\lra A,\ \lambda_i (\sum_{j=1}^\ell  a_jb_j)=a_i \]be the $A$-module homomorphism dual to $A\lra B,\ a\mapsto a\mal b_i$. If $C$ is an $A$-algebra we write $\lambda_i^C=\id_C\otimes \lambda_i:C\otimes B\lra C\otimes A=C$ for the base change of $\lambda_i$ to $C$. Since $b=\sum_i \lambda_i(b)b_i$ for $b\in B$ we obtain \begin{align*}    c\otimes b=c\otimes \sum_i \lambda_i(b)b_i     =\sum_i (\lambda_i(b)c)\otimes b_i, \end{align*}for $c\in C$. Hence $\lambda_i^C(c\otimes b)=\lambda_i(b)\mal c$ is the coefficient of $c\otimes b$ at $1\otimes b_i$ when it is written in the basis $1\otimes b_1,\ldots,1\otimes b_\ell $ of the free $C$-module $C\otimes B$. \iflongversion\LongStart To be precise: \[\lambda_i^C(ac\otimes b_j)=   \begin{cases}    ac & \text{if }i=j, \cr     0 & \text{if }i\neq j,   \end{cases}\]hence $\lambda_i^C(c\otimes b)=\sum_j \lambda_i^C((\lambda_j(b)c)\otimes b_j)=\lambda_i(b)c$. \LongEnd\else\fi This extends to all $f\in C\otimes B$, thus \[f=\sum_{i=1}^\ell \bigl(\lambda_i^C(f)\otimes b_i\bigr). \leqno{(*)}\]\iflongversion\LongStart Write $f=\sum_jc_j\otimes \hat b_j=\sum_j\sum_i\lambda_i^C(c_j\otimes \hat b_j)\otimes b_i=$ $\sum_i\sum_j \lambda_i^C(c_j\otimes \hat b_j)\otimes b_i=$ $\sum_i\lambda_i^C(\sum_j c_j\otimes \hat b_j)\otimes b_i=$ $\sum_i\lambda_i^C(f)\otimes b_i.$ \LongEnd\else\fi \ \par \end{FACT}\begin{FACT}{Definition}{DefnWBT}Let $T$ be a set of indeterminates for $A$ and $B$. We define an $A$-algebra $W(B[T])=A[T]^{\otimes \ell}\ (=\underbrace{A[T]\otimes\ldots\otimes A[T]}_{\ell\text{-times}})$. For $i\in \{1,\ldots,\ell \}$ and $t\in T$ we write \[t(i):=1\otimes ...\otimes 1\otimes \underbrace{t}_{i\text{-th position}}\otimes 1\otimes ...\otimes 1\in A[T]^{\otimes \ell}. \]Let $W_{B[T]}$ be the unique $B$-algebra homomorphism \[W_{B[T]}:B[T]\lra F(W(B[T]))=A[T]^{\otimes \ell}\otimes B\text{ with }\]\[W_{B[T]}(t)=\sum _{i=1}^\ell (t(i)\otimes b_i)\quad (t\in T). \]\iflongversion\LongStart Notice that $W_{B[T]}$ maps 1 to 1 by definition. \LongEnd\else\fi Further, let $F_{A[T]}$ be the unique $A$-algebra homomorphism \[F_{A[T]}:W(F(A[T]))=A[T]^{\otimes \ell}\lra A[T]\]with the property $F_{A[T]}(t(i))=\lambda_i(1)\mal t$ for $t\in T,\ i\in\{1,\ldots,\ell \}$. \end{FACT}\begin{FACT}{:Explicit description of the Weil descent of polynomial algebras.}{WeilBT}The $A$-algebra $W(B[T])$ and the morphism $W_{B[T]}$ described above satisfy condition $(\dagger)$ of \ref{LeftAdjoint}\ref{LeftAdjointDagger}. Hence by \ref{LeftAdjoint}\ref{LeftAdjointDaggerImpliesLA} we may choose $W(B[T])$ as the Weil descent of $B[T]$, and $W_{B[T]}$ as the unit of the adjunction at $B[T]$; these choices are then independent of the basis $b_1,\ldots,b_\ell$ up to a natural $A$-algebra isomorphism (see \ref{LeftAdjoint}\ref{LeftAdjointUnique}). \ \par Explicitly, for every $C\in A\text{-}\kat{Alg}$, the map \map{\tau=\tau(B[T],C):\ }{\Hom_{A\text{-}\kat{Alg}}(A[T]^{\otimes \ell},C)}{\Hom_{B\text{-}\kat{Alg}}(B[T],C\otimes B)}{\phi}{F(\phi)\circ W_{B[T]}=(\phi \otimes \id_B)\circ W_{B[T]}}is bijective, where $\phi \otimes \id_B=F(\phi):F(W(B[T]))=A[T]^{\otimes d}\otimes B\lra C\otimes B$ is the base change of $\phi $. For $t\in T$ we have \[\tau(\phi)(t)=\sum _{i=1}^\ell (\phi(t(i))\otimes b_i). \]\iflongversion\LongStart because $\tau(\phi)(t)=(\phi \otimes \id_B)\circ W_{B[T]}(t)=(\phi \otimes \id_B)(\sum _{i=1}^\ell (t(i)\otimes b_i))=\sum _{i=1}^\ell (\phi(t(i))\otimes b_i).$\LongEnd\else\fi \ \par \smallskip\noindent The compositional inverse of $\tau=\tau(B[T],C)$ is defined as follows. Let $\psi :B[T]\lra C\otimes B$ be a $B$-algebra homomorphism. We define an $A$-algebra homomorphism $\phi :A[T]^{\otimes d}\lra C$ by \[\phi (t(i)):=\lambda_i^C(\psi(t))\ (t\in T,\ i=1,\ldots,\ell ). \]Since $A[T]\otimes B\cong _BB[T]$, $\psi $ is uniquely determined by $\{\psi(t)\st t\in T\}$ and we see that $\phi $ is the unique preimage of $\psi $ under $\tau$. \iflongversion\LongStart It is clear that $\tau(\phi)=\psi$. If also $\tau(\tilde \phi)=\psi$, then $\phi=\tilde \phi$ by the uniqueness of the coefficients in the presentation of $\psi(t)$ in the equation $\psi (t)=\sum_{i=1}^\ell \lambda_i^C(\psi(t))\otimes b_i$, which forces the definition of $\phi$. \LongEnd\else\fi \ \par \noindent Further, one checks easily that $F_{A[T]}$ is the component of the counit of the adjunction at $A[T]$. \iflongversion\LongStart \begin{align*}\tau(F(A[T]),A[T])(F_{A[T]})(t\otimes 1)&=\tau(B[T],A[T])(F_{A[T]})(t\otimes 1)=\sum_{i=1}^\ell  F_{A[T]}(t(i))\otimes b_i\cr &=\sum_{i=1}^\ell  \lambda_i(1)t\otimes b_i=t\otimes \sum_{i=1}^\ell  \lambda_i(1)b_i=t\otimes 1, \end{align*}which shows that $\tau(F(A[T]),A[T])(F_{A[T]})=\id_{F(A[T])}$ as required. \LongEnd\else\fi \end{FACT}\begin{FACT}{:Explicit description of the Weil descent of $B$-algebras.}{WeilD}Now let $D$ be a $B$-algebra. Take a surjective $B$-algebra homomorphism  $\pi_D:B[T]\onto D$ for some set $T$ of indeterminates. Let $I_D$ be the ideal generated in $W(B[T])=A[T]^{\otimes \ell}$ generated by all the $\lambda_i^{W(B[T])}(W_{B[T]}(f))$, where $i\in\{1,\dots,\ell\}$ and $f\in\ker (\pi_D)$. We define \[W(D):=W(B[T])/I_D \]and write \[W(\pi_D):W(B[T])\lra W(D)\]for the residue map. Then the bijection $\tau(B{[}T{]}{,}C)$ from \ref{WeilBT} induces a bijection \[\tau(D{,}C):\Hom_{A\text{-}\kat{Alg}}(W(D),C)\lra \Hom_{B\text{-}\kat{Alg}}(D,F(C))\]such that the diagram \begin{center}\begin{tikzcd}[row sep=10ex,column sep=10ex]  \Hom_{A\text{-}\kat{Alg}}(W(D),C) \ar[d,"\underline{\ }\circ W(\pi_D)"', hook] \ar[r,"\tau(D{,}C)", dashed] & \Hom_{B\text{-}\kat{Alg}}(D,F(C))\ar[d,"\ \underline{\ }\circ \pi_D", hook] \\   \Hom_{A\text{-}\kat{Alg}}(W(B[T]),C)  \ar[r,"\tau(B{[}T{]}{,}C)"', "\simeq"] & \Hom_{B\text{-}\kat{Alg}}(B[T],F(C))\end{tikzcd}\end{center}commutes. \iflongversion\LongStart In order to see this, let $\phi\in \Hom_{A\text{-}\kat{Alg}}(W(B[T]),C)$ and let $\psi=\tau(B{[}T{]}{,}C)(\phi)=(\phi\otimes\id_B)\circ W_{B[T]}$. We need to show that $I_D\subseteq \ker(\phi)\iff \ker(\pi)\subseteq \ker(\psi)$. \ \par \noindent \textit{Proof.}For $f\in B[T]$ we have $W_{B[T]}(f)=\sum _{i=1}^\ell \lambda_i^{W(B[T])}(W_{B[T]}(f))\otimes b_i$ by $(*)$ in \ref{NotationSetup}. Hence \begin{align*}\psi(f)&=(\phi\otimes\id_B)\circ W_{B[T]}(f)= (\phi\otimes\id_B)(\sum _{i=1}^\ell \lambda_i^{W(B[T])}(W_{B[T]}(f))\otimes b_i)\cr &=\sum _{i=1}^\ell \phi(\lambda_i^{W(B[T])}(W_{B[T]}(f)))\otimes b_i \end{align*}Since the $1\otimes b_i$ are a basis of $C\otimes B$ over $C$ it follows that \[\psi(f)=0\iff \forall i\in\{1,\ldots,\ell \}: \phi(\lambda_i^{W(B[T])}(W_{B[T]}(f)))=0. \]Hence $\ker(\pi_D)\subseteq \ker (\psi)$ if and only if \[\forall f\in\ker(\pi_D)\ \forall i\in\{1,\ldots,\ell \}: \phi(\lambda_i^{W(B[T])}(W_{B[T]}(f)))=0. \]Using the definition of $I_D$, this is equivalent to  $I_D\subseteq \ker(\phi)$. \hfill$\diamond$ \LongEnd\else\fi \ \par \noindent The commutativity of the diagram above says that for $\phi\in \Hom_{A\text{-}\kat{Alg}}(W(D),C)$ we have \begin{align*}(+)\quad \tau(D,C)(\phi)\circ \pi_D&=\tau(B{[}T{]}{,}C)(\phi\circ W(\pi_D))\cr &=((\phi\circ W(\pi_D))\otimes\id_B )\circ W_{B[T]}. \end{align*}\ \par \noindent Finally, we display the map $W_D:=\tau(D,W(D))(\id_{W(D)}):D\lra F(W(D))$ explicitly and show that -- together with $W(D)$ -- it satisfies the mapping property of $(\dagger)$ in \ref{LeftAdjoint}\ref{LeftAdjointDagger}. Take $t\in T$. Then by (+) with $C=W(D),\ \phi=\id_{W(D)}$ we see that \[W_D(\pi_D(t))=\sum _{i=1}^\ell  W(\pi_D)(t(i))\otimes b_i=\sum _{i=1}^\ell  (t(i)\mod I_D)\otimes b_i.\leqno{(\sharp)}\]\iflongversion\LongStart Proof: \begin{align*}    W_D(\pi_D(t)) &= (W(\pi_D)\otimes\id_B)(\sum _{i=1}^\ell  t(i)\otimes b_i)\text{, by (+) with }C=W(D),\ \phi=\id_{W(D)}\cr &=\sum _{i=1}^\ell  (t(i)\mod I_D)\otimes b_i. \end{align*}\LongEnd\else\fi \ \par \noindent Pick an $A$-algebra $C$. Since $\tau(D,C)$ is bijective, the mapping property of $(\dagger)$ in \ref{LeftAdjoint}\ref{LeftAdjointDagger} follows after checking $\tau(D,C)(\phi)=F(\phi)\circ W_D$ for all $\phi\in \Hom_{A\text{-}\kat{Alg}}(W(D),C)$. Using $(\sharp)$ this is a straightforward computation. \iflongversion\LongStart Take $t\in T$. Then, using the description of $W_D(\pi_D(t))$ above, we get \begin{align*}F(\phi)\circ W_D(\pi_D(t))&=(\phi\otimes B)(\sum _{i=1}^\ell  (t(i)\mod I_D)\otimes b_i)\cr &=\sum _{i=1}^\ell  \phi(t(i)\mod I_D)\otimes b_i\cr &=\sum _{i=1}^\ell \phi(W(\pi_D)(t_i))\otimes b_i\cr &=((\phi\circ W(\pi_D))\otimes\id_B )(\sum _{i=1}^\ell  t(i)\otimes b_i)\cr &=\tau(D,C)(\phi)(\pi_D(t)) \text{, by (+).}\end{align*}\ \par \medskip\noindent If needed one day: \ \par \medskip\noindent \textbf{Computation of  $\tau(B{[}T{]}{,}W(D))(W(\pi_D))(f)$:}For $f\in B[T]$ we have \begin{align*}\tau(B{[}T{]}{,}W(D))(W(\pi_D))(f)&=(W(\pi_D)\otimes\id_B)\circ W_{B[T]}(f)\cr &=(W(\pi_D)\otimes\id_B)(\sum _{i=1}^\ell \lambda_i^{W(B[T])}(W_{B[T]}(f))\otimes b_i)\cr &=\sum _{i=1}^\ell  W(\pi_D)(\lambda_i^{W(B[T])}(W_{B[T]}(f)))\otimes b_i \end{align*}Now if $f=t\in T$, then $W_{B[T]}(f)=\sum _{j=1}^\ell (t(j)\otimes b_j)$ and so \begin{align*}\tau(B{[}T{]}{,}W(D))(W(\pi_D))(f)&=\sum _{i=1}^\ell  W(\pi_D)(t(i))\otimes b_i \end{align*}\LongEnd\else\fi \ \par Using \ref{LeftAdjoint}\ref{LeftAdjointDaggerImpliesLA},\ref{LeftAdjointUnique}we have justified our choice of $W(D)$ and $W_D$ for the Weil descent. Finally, \ref{LeftAdjoint}\ref{LeftAdjointDaggerImpliesLA} gives the definition of $W$ on morphisms. \end{FACT}\section{Differential Weil Descent}\label{differentialweil}\noindent In this section we present a construction of a Weil descent functor in the category of differential algebras in arbitrary characteristic. We first recall some basic facts about differential algebras and their tensor products. We continue to assume that our rings and algebras are unital and commutative. \begin{FACT}{:Generalities about differential algebra.}{GenDiff}The following are well known generalities on differential algebras whose proofs are straightforward. For a ring $A$ we let $\Der(A)$ denote the family of derivations on $A$. \begin{enumerate}[(i)]\item\labelx{GenDiffExtendOrdinary}Let $A$ be a ring and let $T$ be a not necessarily finite set of indeterminates over $A$. For each $t\in T$ let $f_t\in A[T]$. Let $d\in \Der(A)$. Then there is a unique derivation $\delta $ of $A[T]$ extending $d$ with $\delta(t)=f_t$ for all $t\in T$. \ \par \iflongversion\LongStart \noindent \textit{Proof.} Uniqueness is clear. For existence, let $A\{T\}$ be the differential polynomial ring over $(A,')$ in the differential indeterminates $t\in T$. We write $F'$ for the derivative of $F\in A\{T\}$, $F^{(n)}$ for the $n^\mathrm{th}$ derivative and $A\{T\}_{\leq n}=A[t^{(k)}\st t\in T,k\leq n]$, $n\in\N_0$. Notice that \[f^{(n)}\in A\{T\}_{\leq n}\text{ for all }f\in A[T]\text{ and all }n\in\N_0.\leqno{\quad\quad\quad (\dagger)}\]\noindent Let $I$ be the differential ideal of $A\{T\}$ generated by all the $t'-f_t$, $t\in T$. Hence \[I=(t^{(n+1)}-f_t^{(n)}\st n\in \N_0,\ t\in T).\leqno{\quad\quad\quad (*)}\]Let $\phi$ be the $A$-algebra homomorphism obtained by composing  the inclusion $A[T]\into A\{T\}$ with the residue map $A\{T\}\lra A\{T\}/I$. We show that $\phi$ is an $A$-algebra isomorphism $A[T]\lra A\{T\}/I$. Since $A\{T\}/I$ is a differential $A\{T\}$-algebra and $\phi(t)'=f_t\mod I$ for all $t\in T$, this will prove the assertion. \ \par By induction on $n$ using $(\dagger)$, the presentation of $I$ in $(*)$ implies that $t^{(n)}\mod I$ is in the image of $\phi$ for all $t\in T$ and all $n\in\N_0$. Hence $\phi$ is surjective and we only need to show that $\phi$ is injective. \ \par Take $g\in A[T]$ with $\phi(g)=0$. Hence there is some $n\in \N$ with \[g\in (t^{(k+1)}-f_t^{(k)}\st k\leq n,\ t\in T)_{A\{T\}}.\leqno{\quad\quad\quad (+)}\]Let $\psi:A\{T\}\lra A\{T\}$ be any $A$-algebra homomorphism with $\psi(t^{(n+1)})=f_t^{(n)}$ and $\psi(t^{(k)})=t^{(k)}$ for $t\in T$ and $k\leq n$. Applying $\psi $ to $(+)$, using $(\dagger)$ and $g\in A[T]$, we see that $g\in (t^{(k+1)}-f_t^{(k)}\st k\leq n-1,\ t\in T)_{A\{T\}}$. By induction we see that $g\in (t'-f_t\st t\in T)_{A\{T\}}$. As $g\in A[T]$, this is only possible for $g=0$. \hfill$\diamond$ \LongEnd\else\fi \ \par \end{enumerate}\ \par \noindent For $ d,\delta\in\Der(A)$ we write $[ d,\delta]:A\lra A$ for the Lie-bracket of $ d $ and $\delta$, defined by $[ d,\delta](a)= d\delta(a)-\delta d(a)$. Notice that $[ d,\delta]$ is again a derivation of $A$. \begin{enumerate}[(i),resume]\item \labelx{GenDiffLieFromGenerators}Let $A$ be a ring and let $S\subseteq A$ be a set of generators of the ring $A$. \begin{enumerate}[(a)]\item Let $ d_1, d_2,\delta_1,\ldots,\delta_n\in\Der(A)$ and suppose there are $a_{i}\in A$ with $[ d_1, d_2](s)=\sum_{i=1}^na_{i}\delta_i(s)$ for all $s\in S$. Then $[ d_1, d_2]=\sum_{i=1}^na_{i}\delta_i$. \ \par \iflongversion\LongStart \noindent \textit{Proof.}Let $\eps= d_1 d_2- d_2 d_1:A\lra A$. Then $\eps$ is again a derivation of $A$. It suffices to show that for all $b_1,b_2\in A$ with $\eps(b_j)=\sum_{i=1}^na_{i}\delta_i(b_j)$ $(j=1,2)$ we have $\eps(b_1+b_2)=\sum_{i=1}^na_{i}\delta_i(b_1+b_2)$ and $\eps(b_1\mal b_2)=\sum_{i=1}^na_{i}\delta_i(b_1\mal b_2)$. For addition this is clear. For multiplication we have \begin{align*}    \eps(b_1\mal b_2)&=b_1\eps(b_2)+b_2\eps(b_1)\cr     &=b_1\sum_ia_i\delta_i(b_2)+b_2\sum_ia_i\delta_i(b_1)\cr     &=\sum_ia_i(b_1\delta_i(b_2)+b_2\delta_i(b_1))\cr     &=\sum a_i\delta_i(b_1\mal b_2). \end{align*}\hfill$\diamond$ \LongEnd\else\fi \item Let $\phi:A\lra B$ be a ring homomorphism and let $ d:A\lra A,\ \delta:B\lra B$ be derivations. If $\phi( d s)=\delta(\phi(s))$ for all $s\in S$, then $\phi$ is a differential homomorphism $(A, d)\lra (B,\delta)$. \ \par \iflongversion\LongStart \noindent \textit{Proof.}For $f,g\in A$ with $\phi( d f)=\delta(\phi(f))$ and $\phi( d g)=\delta(\phi(g))$ it suffices to show that $\phi( d (f+g))=\delta(\phi(f+g))$ and $\phi( d (f\mal g))=\delta(\phi(f\mal g))$. But this follows readily from the additivity and the Leibniz rule for derivations. \hfill$\diamond$ \LongEnd\else\fi \end{enumerate}\item \labelx{GenDiffTensor}Let $d\in\Der(A)$ and let $(B,\delta), (C,\partial)$ be differential $(A,d)$-algebras. Then there is a unique derivation $\delta\otimes \partial$ on $B\otimes_AC$ such that the natural maps $B\lra B\otimes_AC, C\lra B\otimes_AC$ are differential maps, cf. \cite[Chapter 2 (1.1), p. 21]{Buium1994}. \ \par \iflongversion\LongStart \ \par \noindent \textit{Proof.}\cite[Chapter 2 (1.1), p. 21]{Buium1994} has no proof, here is one: We write $x'$ for all derivatives. \ \par When $A=\Z$, then the map $B\times C\lra B\otimes_\Z C,\ (b,c)\mapsto b'\otimes c+b\otimes c'$ is $\Z$-bilinear. Hence one gets a $\Z$-bilinear map $D:B\otimes_\Z C\lra B\otimes_\Z C$ with $D(b\otimes_\Z c)=b'\otimes_\Z c+b\otimes_\Z c'$. One checks that $(B\otimes_\Z C,D)$ is a differential ring. \ \par The $\Z$-bilinear map $B\times C\lra B\otimes_AC,\ (b,c)\mapsto b\otimes_Ac$ then gives rise to a $\Z$-bilinear map $\phi:B\otimes_\Z C\onto B\otimes_AC$ and this is a ring-homomorphism. We show that its kernel $I$ is a differential ideal for $D$: $I$ is generated by all elements of the form $(ab)\otimes c-b\otimes (ac)$, where tensors are taken over $\Z$ now. Then $D((ab)\otimes c-b\otimes (ac))=$ \begin{align*}\quad\quad    &=(ab)'\otimes c+(ab)\otimes c'-b'\otimes (ac)-b\otimes (ac)'\cr     &=(a'b)\otimes c+(ab')\otimes c+(ab)\otimes c' -b'\otimes (ac)-b\otimes (a'c)-b\otimes (ac')\cr     &=\bigl((a'b)\otimes c-b\otimes (a'c)\bigr)+\bigl((ab')\otimes c-b'\otimes (ac)\bigr)+\bigl((ab)\otimes c'-b\otimes (ac')\bigr), \end{align*}which indeed is in $I$. Hence $I$ is generated as an ideal by a set that is closed under $D$. Thus $I$ is differential and we obtain a derivative on $B\otimes_AC$. This derivative has the required properties. \ \par \hfill$\diamond$ \ \par \textcolor{red}{$\Der(A)$ for rings needs to be introduced. What is $\Der_A(B)$ when $B$ is an $A$-algebra and $A$ is just a ring? This only is useful for us when $A$ is a subring of $B$ and then one can take those derivations of $B$ that induce a derivation on $A$. In general, one could still take those derivations of $B$ that induce a derivation on the image of $A\lra B$. However, not all such derivations are induced by derivations on $A$ (and if they are induced, then possibly in several ways). This creates conflicts and cumbersome statements in our context. As this plays a marginal role I propose to introduce $\Der_A(B)$ only in the case when $A\subseteq B$. This will cover the case of Weil descent of fields of a finite field extension. } \textcolor{blue}{I agree. Let us avoid the use of $\Der_A(B).$}\LongEnd\else\fi \item\labelx{GenDiffLieAndTensor}Now let $d_1,d_2\in\Der(A)$, $\delta_1,\delta_2\in\Der(B)$ and $\partial_1,\partial_2\in\Der(C)$ such that $(B,\delta_i),(C,\partial_i)$ are differential $(A,d_i)$-algebras. Then, for $a_1,a_2\in A$, straightforward checking shows that \begin{enumerate}\item $(a_1\delta_1+a_2\delta_2)\otimes (a_1\partial_1+a_2\partial_2)=a_1(\delta_1\otimes \partial_1)+ a_2(\delta_2\otimes \partial_2)$. \item $[\delta_1,\delta_2]\otimes [\partial_1,\partial_2]=[\delta_1\otimes \partial_1,\delta_2\otimes \partial_2].$ \end{enumerate}\iflongversion\LongStart \noindent (a). \begin{align*}    (a_1\delta_1&+a_2\delta_2)\otimes (a_1\partial_1+a_2\partial_2) (x\otimes y)=     (a_1\delta_1+a_2\delta_2)x\otimes y+x\otimes (a_1\partial_1+a_2\partial_2)y\cr     &=a_1\delta_1x\otimes y+a_1x\otimes\partial_1y+a_2\delta_2x\otimes y+a_2x\otimes \partial_2y    \cr     &=a_1(\delta_1\otimes \partial_1)(x\otimes y)+a_2(\delta_2\otimes \partial_2)(x\otimes y)\end{align*}\ \par \noindent (b). \ \par \noindent Remark: Notice that in general $[\delta_1,\delta_2]\otimes \partial\neq [\delta_1\otimes \partial,\delta_2\otimes \partial]$, e.g. consider the case $\delta_1=\delta_2=0$ and notice that $0\otimes \partial\neq 0$ in general. \ \par \noindent \textit{Proof.}\begin{align*}[\delta_1\otimes &\partial_1,\delta_2\otimes \partial_2](b\otimes c)=(\delta_1\otimes \partial_1)((\delta_2\otimes \partial_2)(b\otimes c))- (\delta_2\otimes \partial_2)((\delta_1\otimes \partial_1)(b\otimes c))\cr &=(\delta_1\otimes \partial_1)(\delta_2b\otimes c+b\otimes \partial_2c))- (\delta_2\otimes \partial_2)(\delta_1b\otimes c+b\otimes \partial_1c))\cr &=\delta_1\delta_2b\otimes c+\delta_2b\otimes \partial_1c+\delta_1b\otimes \partial_2c+b\otimes \partial_1\partial_2c\cr &\quad -\delta_2\delta_1b\otimes c-\delta_1b\otimes \partial_2c-\delta_2b\otimes \partial_1c-b\otimes \partial_2\partial_1c\cr     &=\delta_1\delta_2b\otimes c+b\otimes \partial_1\partial_2c-\delta_2\delta_1b\otimes c-b\otimes \partial_2\partial_1c\cr     &=\delta_1\delta_2b\otimes c-\delta_2\delta_1b\otimes c+b\otimes \partial_1\partial_2c-b\otimes \partial_2\partial_1c\cr     &=[\delta_1,\delta_2](b)\otimes c+b\otimes [\partial_1,\partial_2](c)\cr     &=([\delta_1,\delta_2]\otimes [\partial_1,\partial_2])(b\otimes c). \end{align*}\hfill$\diamond$ \LongEnd\else\fi \end{enumerate}\end{FACT}\bigskip \ \par \noindent As in Section \ref{classicalweil} we work with a ring $A$ and an $A$-algebra $B$ that is free and finitely generated by $b_1,\ldots,b_\ell $ as an $A$-module. We fix a derivation $d$ on $A$ and a derivation $\delta$ on $B$ such that $(B,\delta)$ is a differential $(A,d)$-algebra (meaning that the structure map $A\lra B$ is differential). \ \par By \ref{GenDiff}\ref{GenDiffTensor}, for any differential $(A,d)$-algebra $(C,\partial_C)$, there is a unique derivation $\partial_C\otimes \delta$ on $F(C)=C\otimes B$ such that the natural map $C\lra F(C)$ is a differential $(B,\delta)$-algebra morphism. \begin{FACT}{Theorem}{partialDW}Let $(D,\partial_D)$ be a differential $(B,\delta)$-algebra. Then there is a unique derivation $\partial_D^W$ on $W(D)$ such that $(W(D),\partial_D^W)$ is a differential $(A,d)$-algebra and \[W_D:(D,\partial_D)\lra (F(W(D)),\partial_D^W\otimes \delta)\]is a differential $(B,\delta)$-algebra homomorphism, i.e., $W_D\circ \partial_D=(\partial_D^W\otimes \delta)\circ W_D$. \ \par Furthermore, $\partial_D^W$ only depends on $\partial_D$ and not on $\delta$. \end{FACT}\begin{proof}Take any set $T$ of differential indeterminates and a surjective $(B,\delta)$-algebra homomorphism $\pi_D:(B\{T\},\partial)\onto (D,\partial_D)$. Here, the differential polynomial ring $B\{T\}$ is considered just as polynomial ring over $B$ in the algebraic indeterminates $t_\theta$, where $t\in T$ and $\theta\in\Theta:=\{\partial^i:i\geq 0\}$. Further, $\partial=\partial_{B\{T\}}:B\{T\}\lra B\{T\}$ is the natural derivation, thus $\partial t_\theta =t_{\partial\theta}$. \ \par We choose $W_{B\{T\}}:B\{T\}\lra F(W(B\{T\}))$ according to \ref{DefnWBT} for the set of indeterminates $\{t_\theta\st t\in T,\ \theta\in\Theta\}$ and $W_D:D\lra F(W(D))$ according to \ref{WeilD}. Also recall $(\sharp)$ in \ref{WeilD}, which says that  $W_D(\pi_D(t_\theta))=\sum _{i=1}^\ell  W(\pi_D)(t_\theta(i))\otimes b_i$ \ \par  \smallskip \ \par \noindent \Claim 1 If $\eps:W(D)\lra W(D)$ is a derivation such that $(W(D),\eps)$ is a differential $A$-algebra, then for all $t\in T$ and any $\theta\in \Theta$ we have \[((\eps\otimes \delta)\circ W_D)(\pi_D(t_\theta))= \sum_{i=1}^\ell \biggl(\eps (W(\pi_D)(t_\theta(i)))+\sum_{j=1}^\ell \lambda_i(\delta b_j)\mal W(\pi_D)(t_\theta(j))  \biggr)\otimes b_i. \]See \ref{GenDiff}\ref{GenDiffTensor} for the definition of $\eps\otimes\delta$. \ \par \noindent \textit{Proof.}This is a straightforward calculation using $\delta b_i=\sum_{j=1}^\ell \lambda_j(\delta b_i)b_j$. \iflongversion\LongStart We need the $A$-algebra assumption in order to be able to form $\eps\otimes\delta$. Now we have \begin{align*}((\eps\otimes \delta)&\circ W_D)(\pi_D(t_\theta))=(\eps \otimes \delta)(\sum_{i=1}^\ell W(\pi_D)(t_\theta(i))\otimes b_i)\cr     &=\sum_{i=1}^\ell \biggl(\eps (W(\pi_D)(t_\theta(i)))\otimes b_i+W(\pi_D)(t_\theta(i))\otimes \delta b_i\biggr)\cr     &=\sum_{i=1}^\ell \biggl(\eps (W(\pi_D)(t_\theta(i)))\otimes b_i+W(\pi_D)(t_\theta(i))\otimes \sum_{j=1}^\ell \lambda_j(\delta b_i)b_j\biggr)\cr     &=\biggl(\sum_{i=1}^\ell \eps (W(\pi_D)(t_\theta(i)))\otimes b_i\biggr)+     \biggl(\sum_{i=1}^\ell\sum_{j=1}^\ell     W(\pi_D)(t_\theta(i))\otimes \lambda_j(\delta b_i)b_j\biggr)\cr &=\biggl(\sum_{i=1}^\ell \eps (W(\pi_D)(t_\theta(i)))\otimes b_i\biggr)+     \biggl(\sum_{j=1}^\ell\sum_{i=1}^\ell     W(\pi_D)(t_\theta(i))\otimes \lambda_j(\delta b_i)b_j\biggr)\cr     &=\biggl(\sum_{i=1}^\ell \eps (W(\pi_D)(t_\theta(i)))\otimes b_i\biggr)+     \biggl(\sum_{i=1}^\ell\sum_{j=1}^\ell W(\pi_D)(t_\theta(j))\otimes \lambda_i(\delta b_j)b_i\biggr)\cr     &=\sum_{i=1}^\ell     \biggl(\eps (W(\pi_D)(t_\theta(i)))\otimes b_i+\sum_{j=1}^\ell W(\pi_D)(t_\theta(j))\otimes \lambda_i(\delta b_j)b_i\biggr)\cr     &=\sum_{i=1}^\ell \biggl(\eps (W(\pi_D)(t_\theta(i)))+\sum_{j=1}^\ell \lambda_i(\delta b_j)W(\pi_D)(t_\theta(j))  \biggr)\otimes b_i. \end{align*}\LongEnd\else\fi \hfill$\diamond$ \ \par \smallskip\noindent \Claim 2 If $\eps:W(D)\lra W(D)$ is a derivation such that $(W(D),\eps)$ is a differential $A$-algebra, then $W_D\circ \partial=(\eps\otimes \delta)\circ W_D$ if and only if for all $t_\theta(i)$ we have \[\eps(W(\pi_D)(t_\theta(i)))= W(\pi_D)(t_{\partial\theta}(i)-\sum_{j=1}^\ell \lambda_i(\delta(b_j))\mal W(\pi_D)(t_\theta(j)).\leqno{(*)}\]\noindent \textit{Proof.}By \ref{GenDiff}\ref{GenDiffLieFromGenerators}(b), $W_D\circ \partial=(\eps\otimes \delta)\circ W_D$ if and only if $((\eps\otimes \delta)\circ W_D)(\pi_D(t_\theta))=(W_D\circ \partial)(\pi_D(t_\theta))$ for all $t_\theta$. By Claim 1 this is equivalent to \begin{align*}\sum_{i=1}^\ell &\biggl(\eps (W(\pi_D)(t_\theta(i)))+\sum_{j=1}^\ell \lambda_i(\delta b_j)\mal W(\pi_D)(t_\theta(j))  \biggr)\otimes b_i\cr &=W_D(\partial(\pi_D(t_\theta)))\cr &=W_D(\pi_D(\partial t_\theta)),\text{ since }\pi_D\text{ is a differential map}\cr &=W_D(\pi_D(t_{\partial\theta}))\cr &=\sum _{i=1}^\ell  W(\pi_D)(t_{\partial\theta}(i))\otimes b_i\text{, by }(\sharp)\text{ in \ref{WeilD}}. \end{align*}Since $1\otimes b_1,\ldots,1\otimes b_\ell$ is a basis of $F(W(D))$ over $W(D)$, the identity is equivalent to $(*)$ being true for all $i\in\{1,\ldots,\ell\}$. \hfill$\diamond$ \ \par \medskip\noindent Claim 2 implies the uniqueness statement of the \factname, because the set of all the $W(\pi_D)(t_\theta(i))$ generates $W(D)$. For existence, we first deal with $B\{T\}$ instead of $D$. In that case, Claim 2 says that we only need to find a derivation $\partial_{B\{T\}}^W$ on $W(B\{T\})$ such that $(W(B\{T\}),\partial_{B\{T\}}^W)$ is a differential $(A,d)$-algebra with the property \[\partial_{B\{T\}}^W(t_\theta(i))=t_{\partial \theta}(i)-\sum_{j=1}^\ell \lambda_i(\delta(b_j))\mal t_\theta(j). \]By \ref{GenDiff}\ref{GenDiffExtendOrdinary} applied to the polynomial ring $W(B\{T\})$ over $A$, such a derivation indeed exists.\footnote{Notice that $W(B\{T\})$ naturally is a differential polynomial ring over $A$, but $\partial_{B\{T\}}^W$ is in general not the natural derivation of $W(B\{T\})$.}\ \par \medskip\noindent It remains to prove that there is a derivation $\partial_D^W$ of $W(D)$ as required. \ \par \smallskip\noindent \Claim 3 The ideal $I_D$ of $W(B\{T\})$ (see \ref{WeilD}) is a differential ideal for $\partial_{B\{T\}}^W$. \ \par \noindent \textit{Proof.}Let $f\in \ker(\pi_D)$. Then $W_{B\{T\}}(f)=\sum_{i=1}^\ell g_i\otimes b_i$, where $g_i=\lambda_i^{W(B\{T\})}(W_{B\{T\}}(f))$. By definition of $I_D$ it suffices to show that $\partial_{B\{T\}}^W(g_i)\in I_D$ \footnote{Notice that the module homomorphism $\lambda_i^{W(B(\{T\}))}:F(W(B\{T\}))\lra W(B\{T\})$ does not in general commute with the derivations.}. Now one checks that \[W_{B\{T\}}(\partial_{B\{T\}}(f))=\sum_{i=1}^\ell\biggl(\partial_{B\{T\}}^W(g_i)+\sum_{j=1}^\ell \lambda_i(b_j)g_j\biggr)\otimes b_i \]\iflongversion\LongStart \begin{align*}   W_{B\{T\}}(\partial_{B\{T\}}(f))&= (\partial_{B\{T\}}^W\otimes \delta)(W_{B\{T\}}(f))\text{, since }W_{B\{T\}}\text{ is differential}\cr    &=\sum_{i=1}^\ell\partial_{B\{T\}}^W(g_i)\otimes b_i +\sum_{i=1}^\ell g_i\otimes \delta(b_i)\cr    &=\sum_{i=1}^\ell\partial_{B\{T\}}^W(g_i)\otimes b_i +\sum_{i=1}^\ell g_i\otimes \sum_{j=1}^\ell\lambda_j(b_i)b_j\cr    &=\sum_{i=1}^\ell\partial_{B\{T\}}^W(g_i)\otimes b_i +\sum_{j=1}^\ell\sum_{i=1}^\ell \lambda_j(b_i)g_i\otimes b_j\cr    &=\sum_{i=1}^\ell\partial_{B\{T\}}^W(g_i)\otimes b_i +\sum_{i=1}^\ell\sum_{j=1}^\ell \lambda_i(b_j)g_j\otimes b_i\cr    &=\sum_{i=1}^\ell\biggl(\partial_{B\{T\}}^W(g_i)+\sum_{j=1}^\ell \lambda_i(b_j)g_j\biggr)\otimes b_i\cr \end{align*}\LongEnd\else\fi Since $1\otimes b_1,\ldots,1\otimes b_\ell$ is a basis of $F(W(B\{T\}))$ over $W(B\{T\})$ we see that \[\lambda_i^{W(B\{T\})}(W_{B\{T\}}(\partial_{B\{T\}}(f)))=\partial_{B\{T\}}^W(g_i)+\sum_{j=1}^\ell \lambda_i(b_j)g_j. \]The left hand side here is in $I_D$ by definition of $I_D$ and because $\ker(\pi)$ is differential for $\partial_{B\{T\}}$. As all $g_i\in I_D$ this entails $\partial_{B\{T\}}^W(g_i)\in I_D$. \hfill$\diamond$ \ \par \smallskip\noindent By Claim 3, the derivation $\partial_{B\{T\}}$ induces a derivation $\delta_D^W$ of $W(D)=W(B\{T\})/I_D$ such that $(W(D),\partial_D^W)$ is a differential $(A,d)$-algebra. It remains to show that $W_D$ is a differential $(B,\delta)$-algebra homomorphism, i.e., $W_D\circ \partial_D=(\partial_D^W\otimes \delta)\circ W_D$. This can be seen by a diagram chase as follows. Consider the diagram of maps \begin{center}\begin{tikzcd}[row sep=30pt, column sep=50pt]\ & D \arrow[dl, leftarrow, "\pi"'] \arrow[rr, "W_D"] \arrow[dd, "\partial_D" near end ] & \ & W(D)\otimes B \arrow[dl, leftarrow, "W(\pi)\otimes \id_B"'] \arrow[dd, "\partial^W_D\otimes \delta"]\\   {B\{T\}} \arrow[rr, crossing over, "W_{B\{T\}}" near end] \arrow[dd,"\partial_{B\{T\}}"'] & \ & W({B\{T\}})\otimes B \\ \ & D \arrow[dl, leftarrow, "\pi"'] \arrow[rr, "W_D"' near start] & \ & W(D)\otimes B \arrow[dl, leftarrow, "W(\pi)\otimes \id_B"] \\   {B\{T\}} \arrow[rr, "W_{B\{T\}}"'] &\ & W({B\{T\}})\otimes B \arrow[from=uu, crossing over, "\partial^W_{B\{T\}}\otimes \delta" near start]\\ \end{tikzcd}\end{center}The claim is that the back side of this cube is commutative. Now, all other sides of the cube are commutative squares, because \begin{itemize}\item Bottom and top of the cube are identical and commute as a property of the classical Weil descent. \item The front of the cube commutes as we know the \factname\ already for $(B\{T\},\partial_{B\{T\}})$. \item The square on the left hand side commutes by choice of $(B\{T\},\partial_{B\{T\}})$. \item The square on the right hand side commutes by applying base change to $B$ to the definition of $\partial_D^W$. \end{itemize}Since $\pi$ is surjective, we see that the back of the cube also commutes. This finishes the proof of existence and uniqueness of $\partial_W$. From Claim 2 we see that the definition of $\partial_{B\{T\}}^W$ only depends on $\partial_{B\{T\}}$ and not on $\delta$, because the structure map $B\lra D$ is differential. But then by construction of $\partial_{D}^W$ after Claim 3, $\partial_{D}^W$ only depends on $\partial_D$ and not on $\delta$. \end{proof}\begin{FACT}{Theorem}{liebracket}Let again $B$ be an $A$-algebra that is finitely generated and free as an $A$-module and let $D$ be a $B$-algebra. \ \par Let $\Der_B(D)$ be the set of all $\partial\in \Der(D)$ for which there are derivations $d$ of $A$ and $\delta$ of $B$ such that the structure maps of $B$ and $D$ are differential maps $(A,d)\lra (B,\delta)$ and $(B,\delta)\lra (D,\partial)$, respectively.\footnote{If the structure morphism of $D$ as a $B$-algebra is injective and we think of the structure maps $A\lra B$ and $B\lra D$ as inclusions, then $\Der_B(D)$ is the set of all derivations of $D$ that restrict to derivations on $A$ and $B$.}\ \par Then $\Der_B(D)$ is an $A$-submodule  and a Lie subring of $\Der(D)$ and the map $\Der_B(D)\lra \Der(W(D))$ that sends $\partial$ to the derivation $\partial^W$ defined in \ref{partialDW}, is an $A$-module and a Lie ring homomorphism. \ifoldversion\OldStart Let $M=\Der(A)\times \Der(B)\times \Der(D)$. Then $M$ is an $A$-module and a Lie-ring with the coordinate-wise Lie bracket. Let $\Der(A,B,D)$ be the set of all $(d,\delta,\partial)\in M$ such that the structure maps of $B$, $D$ are differential maps $(A,d)\lra (B,\delta)$ and $(B,\delta)\lra (D,\partial)$, respectively. Then $\Der(A,B,D)$ is an $A$-submodule  and a Lie subring of $M$ and the map $\Der(A,B,D)\lra \Der(W(D))$ that sends $(d,\delta,\partial)$ to the derivation $\partial^W$ defined in \ref{partialDW}, is an $A$-module and a Lie ring homomorphism. \ \par Explicitly, given $(d_1,\delta_1,\partial_1),(d_2,\delta_2,\partial_2)\in \Der(A,B,D)$ we have \OldEnd\else\fi Explicitly, given $\partial_1,\partial_2\in \Der_B(D)$ we have \begin{enumerate}[(i)]\item $(a_1\partial_1+a_2\partial_2)^W=a_1\partial_1^W+a_2\partial_2^W$ for all $a_1,a_2\in A$. \item $[\partial_1,\partial_2]^W=[\partial_1^W,\partial_2^W]$. In particular, $\partial_1^W,\partial_2^W$ commute if $\partial_1,\partial_2$ commute. \end{enumerate}\end{FACT}\begin{proof}In each case, the derivation of $W(D)$ on the right hand side turns $W(D)$ into a differential $A$-algebra, when $A$ is furnished with the derivation $a_1d_1+a_2d_2$ and $[d_1,d_2]$ respectively. By uniqueness in \ref{partialDW} we thus only need to verify the defining equation of the left hand side for the right hand side. \ \par \smallskip\noindent (i) Using \ref{GenDiff}\ref{GenDiffLieAndTensor}(a)we get $((a_1\partial_1^W+a_2\partial_2^W)\otimes (a_1\delta_1+a_2\delta_2))\circ W_D= a_1(\partial_1^W\otimes \delta_1)\circ W_D+a_2(\partial_2^W\otimes \delta_2)\circ W_D= a_1W_D\circ \partial_1+a_2W_D\circ \partial_2=W_D\circ (a_1\partial_1+a_2\partial_2)$, since $W_D$ is an $A$-algebra homomorphism. \ \par \smallskip\noindent (ii) Using \ref{GenDiff}\ref{GenDiffLieAndTensor}(b) we get $([\partial_1^W,\partial_2^W]\otimes [\delta_1,\delta_2])\circ W_D= [\partial_1^W\otimes \delta_1,\partial_2^W\otimes \delta_2]\circ W_D= (\partial_1^W\otimes \delta_1)\circ (\partial_2^W\otimes \delta_2)\circ W_D- (\partial_2^W\otimes \delta_2)\circ (\partial_1^W\otimes \delta_1)\circ W_D= (\partial_1^W\otimes \delta_1)\circ W_D\circ \partial_2- (\partial_2^W\otimes \delta_2)\circ W_D\circ \partial_1= W_D\circ \partial_1\circ \partial_2- W_D\circ \partial_2\circ \partial_1=W_D\circ [\partial_1,\partial_2]. $ \end{proof}\ \par \ifoldversion\OldStart \begin{FACT}{Definition}{DefnDiffWeil}\textcolor{red}{merge to 3.6 and update labels}Let $(A,d_1,\dots,d_m)$ be a differential ring \textcolor{red}{Is it commonly understood what this means?}and $(B,\delta_1,\dots,\delta_m)$ a differential $(A,d_1,\dots,d_m)$-algebra that is finite and free as an $A$-module. For $(D,\partial_1,\dots,\partial_m)$ a differential $(B,\delta_1,\dots,\delta_m)$-algebra, we define the \notion{differential Weil descent} (or \notion{Weil restriction}) of $D$ from $B$ to $A$, denoted $W^{\operatorname{diff}}(D)$, to be the algebra $W(D)$ equipped with the derivations $\partial_1^W,\dots,\partial_m^W$ from Theorem \ref{partialDW}. \end{FACT}We conclude this section with an important consequence of Theorem \ref{partialDW} and Fact \ref{liebracket} that justifies Definition \ref{DefnDiffWeil}. \begin{FACT}{Corollary}{DiffDescentMain}\textcolor{red}{merge into 3.6 and update labels}Let $(A,d_1,\dots,d_m)$, $(B,\delta_1,\dots,\delta_m)$, and $(D,\partial_1,\dots,\partial_m)$ be as in Definition~\ref{DefnDiffWeil}. If $(C,\eta_1,\dots,\eta_m)$ is a differential $(A,d_1,\dots,d_m)$-algebra, then the bijection \[\tau(D{,}C):\Hom_{A\text{-}\kat{Alg}}(W(D),C)\lra \Hom_{B\text{-}\kat{Alg}}(D,F(C)),\ \phi\longmapsto {F}(\phi)\circ {W}_D \]from \ref{WeilD} restricts to a bijection between the differential $(A,d_1,\dots,d_m)$-algebra homomorphisms from $W^{\operatorname{diff}}(D)$ to $(C,\eta_1,\dots,\eta_m)$ and the differential $(B,\delta_1,\dots,\delta_m)$-algebra homomorphisms from $(D,\partial_1,\dots,\partial_m)$ to $F^{\operatorname{diff}}(C):=(F(C),\eta_1\otimes\delta_1,\dots,\eta_m\otimes\delta_m)$. Also, if $f:(D,\partial_1,\dots,\partial_m)\to (D',\partial_1',\dots,\partial_m')$ is a differential $(B,\delta_1,\dots,\delta_m)$-algebra homomorphism, then $W(f):W^{\operatorname{diff}}(D)\to W^{\operatorname{diff}}(D')$ is a differential $(A,d_1,\dots,d_m)$-algebra homomorphism. \ \par Furthermore, if $[\partial_i,\partial_j]=a_1\partial_1+\cdots+a_m\partial_m$ for some $a_k\in A$, then $[\partial_i^W,\partial_j^W]=a_1\partial_1^W+\cdots+a_m\partial_m^W$. In particular, if the derivations on $D$ commute then so do the derivations on $W^{\operatorname{diff}}(D)$. \end{FACT}\OldEnd\else\fi \ \par \noindent Theorems \ref{partialDW} and \ref{liebracket} establish \begin{FACT}{:The differential Weil descent.}{DiffWeilDescent}Let $A$ be a ring and let $d_1,\ldots,d_m\in \Der(A)$. We write $d=(d_1,\ldots,d_m)$. A differential $(A,d)$-algebra is an $A$-algebra $C$ together with $\eta_1,\ldots,\eta_m\in \Der(C)$, such that the structure map $A\lra C$ is a differential morphism $(A,d_i)\lra (C,\eta_i)$ for all $i\in\{1,\ldots,m\}$. Let $(A,d)\text{-}\kat{Alg}$ be the category of differential $(A,d)$-algebras whose morphisms are ring homomorphisms preserving the appropriate derivations. \ \par We fix a differential $(A,d)$-algebra $(B,\delta)$ such that $B$ is finitely generated and free as an $A$-module. Then \begin{enumerate}[(i)]\item The functor $F^\mathrm{diff}:(A,d)\text{-}\kat{Alg}\lra (B,\delta)\text{-}\kat{Alg}$ that sends $(C,\eta)$ to $(C\otimes B,\eta_1\otimes \delta_1,\ldots,\eta_m\otimes \delta_m)$ has a left adjoint $W^\mathrm{diff}:(B,\delta)\text{-}\kat{Alg}\lra (A,d)\text{-}\kat{Alg}$, which we call the \notion{differential Weil descent}(or differential Weil restriction)from $(B,\delta)$ to $(A,d)$. It sends $(D,\partial)$ to $(W(D),\partial ^W)$ with $\partial ^W=(\partial_1^W,\ldots,\partial_m^W)$, $\partial_i^W$ as defined in \ref{partialDW}, and a morphism $f$ to $W(f)$. \item Let $(C,\eta)\in (A,d)\text{-}\kat{Alg}$ and let $(D,\partial)\in (B,\delta)\text{-}\kat{Alg}$. Then the bijection \[\tau(D{,}C):\Hom_{A\text{-}\kat{Alg}}(W(D),C)\lra \Hom_{B\text{-}\kat{Alg}}(D,F(C)),\ \phi\longmapsto {F}(\phi)\circ {W}_D \]from the classical Weil descent \ref{WeilD} restricts to a bijection \[\Hom_{(A,d)\text{-}\kat{Alg}}(W^\mathrm{diff}(D,\partial),(C,\eta))\lra \Hom_{(B,\delta)\text{-}\kat{Alg}}((D,\partial),F^\mathrm{diff}(C,\eta)). \]\item If $(D,\partial)\in (B,\delta)\text{-}\kat{Alg}$ and the $\partial_1,\ldots,\partial_m$ are Lie commuting with structure coefficients \iflongversion\LongStart See \cite{Yaffe2001}, \cite{Singer2007} and \cite{Guzy2007}.\LongEnd\else\fi $a_{ij}^k\in A$ $(1\leq i,j,k\leq m)$, i.e., \[[\partial_i,\partial_j]=\sum_{k=1}^ma_{ij}^k\partial_k \quad (1\leq i,j\leq m), \]then also the derivations $\partial_1^W,\ldots,\partial_m^W$ of $W(D)$ are Lie commuting with structure coefficients $a_{ij}^k$. \end{enumerate}\end{FACT}\begin{proof}By Theorem \ref{partialDW}, the map $W_D:D\to F(W(D))$ is differential. Hence, if a morphism $\phi:W(D)\to C$ is differential, so is ${F}(\phi)\circ {W}_D$. Thus the map $\tau(D,C)$ restricts to differential morphisms as claimed in (ii). Now recall from \ref{WeilD} (and \ref{LeftAdjoint}\ref{LeftAdjointDaggerImpliesLA})that $W(f)$ is the unique map that corresponds to the morphism $W_{D'}\circ f:D\to F(W(D'))$ under the bijection $\tau(D,W(D'))$. As the latter morphism is differential, $W(f)$ must be differential. This entails (i), see \ref{LeftAdjoint}\ref{LeftAdjointLA}. Item (iii) follows immediately from \ref{liebracket}. \end{proof}\section{Differential Algebraic Setup}\label{prelimconven}\ \par \noindent One of our applications of the differential Weil descent constructed in Section~\ref{differentialweil} is to show that every algebraic extension of a large field that is a model of $\uc_m$ is again a model of $\uc_m$. This will be achieved in a similar manner to the fact that algebraic extensions of large fields are again large \cite[Proposition 2.1]{Pop1996}. The latter uses classical Weil descent and, in particular, the fact that the Weil functor preserves smoothness \cite[Appendix 2]{Oester1984}. Before presenting the applications we will review the theory $\uc_m$ and establish some useful characterizations. But first some preliminaries. \begin{FACT}{:Some preliminaries and conventions.}{}We fix a distinguished set of commuting derivations $\Delta=\{\delta_1,\dots,\delta_m\}$. We assume that all our fields are of characteristic zero. We work inside a large (saturated) differentially closed field $(\U,\Delta)$, and $K$ denotes a differential subfield of $\U$. A \notion{Kolchin-closed} subset of $\U^n$ is the common zero set of a set of differential polynomials over $\U$ in $n$ differential variables; such sets are also called \notion{affine differential varieties}. If the definining polynomials can be chosen with coefficients in $K$ we will say the set is \notion[]{defined over $K$}. The Kolchin-closed sets (defined over $K$) are the closed sets of a topology, called the \notion{Kolchin-topology} of $\U^n$ (over $K$). \ \par By a \notion{differential variety} $V$ we mean a topological space which has as finite open cover $V_1,\dots,V_s$ with each $V_i$ homeomorphic to an affine differential variety (inside some power of $\U$)such that the transition maps are regular as differential morphisms; see \cite[Chap. 1, section 7]{LeoSan2013}. We will say that the differential variety is over $K$ when all objects and morphisms can be defined over $K$. This definition also applies to our use of algebraic varieties, replacing Kolchin-closed with Zariski-closed in powers of $\U$ (recall that $\U$ is algebraically closed and a universal domain for algebraic geometry in Weil's ``foundations'' sense). \end{FACT}\begin{FACT}{Remark}{}Suppose $L/K$ is a finite field extension. Recall that the derivations $\delta_1,\dots,\delta_m$ extend uniquely from $K$ to $L$. \begin{enumerate}\item[(i)] Given a differential $L$-algebra $D$, by \ref{DiffWeilDescent}, there is a natural one-to-one correspondence between the differential $L$-points of $D$ and the differential $K$-points of $W^{\operatorname{diff}}(D)$. \item[(ii)] In the case when $D$ is the differential coordinate ring of an affine differential variety, say $D=L\{V\}$, and when $L$ has a $K$-basis $b_1,\dots,b_\ell$ of constants (meaning that $\delta_i(b_j)=0$ for all $i,j$),     then a construction     of the differential Weil descent $W^{\operatorname{diff}}(L\{V\})$ appears in \cite[\S5]{LeSMos2016}.     However, a basis of constants does not always exist, see Example \ref{example1} below. \end{enumerate}\end{FACT}\begin{FACT}{Example}{example1}We work in the ordinary case $\Delta=\{\delta\}$. Let $K=\mathbb Q(t)$ with $\delta t=1$ and consider the finite extension $L=K(b)$ where $b^2=t$. Then the (unique) induced derivation on $L$ is given by $\delta b=\frac{1}{2b}=\frac{b}{2t}$. Fix the basis $\{1,b\}$ of $L$ as a $K$-module. Consider the differential variety $V$ given by $\delta x=0$ (i.e., $V$ is simply the constants of $\U$) viewed as a differential variety over $L$. The differential Weil descent $W^{\operatorname{diff}}(V)$ is obtained as follows; write $x$ as $x_1+x_2b$ and compute $$\delta(x_1+bx_2)=\delta x_1+(\delta b) x_2+b \delta x_2=\delta x_1+\frac{b}{2t} x_2+b \delta x_2=\delta x_1+\left(\frac{x_2}{2t}+\delta x_2\right)b.$$ Thus, $W^{\operatorname{diff}}(V)$ is the affine differential variety over $K$ given by the equations $$\delta x_1=0\quad \text{ and }\quad \delta x_2+\frac{x_2}{2t}=0.$$ Note that this is not contained in a product of the constants, as one might expect. Of course, if $\delta(b)$ were zero we would instead obtain the equations $\delta x_1=0$ and $\delta x_2=0$ (which would occur if $\delta$ were trivial on $K$, for instance). \end{FACT}\begin{FACT}{void}{}We fix integers $n>0$ and $r\geq 0$, and set $$ \Gamma_n(r) = \{(\xi,i) \in \mathbb N^m\times\{1,\dots,n\} \st \sum_{i=1}^m \xi_i \leq r\}. $$ \ \par \noindent We make use of prolongation spaces and recall the definition and some properties. The \notion[]{$r$-th nabla map} $\nabla_r:\U^n\to \U^{\alpha(n,r)}$ with $\alpha(n,r):=|\Gamma_n(r)|=n\cdot\binom{r+m}{m}$ is defined by $$\nabla_r(x)= (\delta^\xi x_i:\,(\xi,i)\in \Gamma_n(r)),$$ where $x=(x_1,\dots,x_n)$ and $\delta^\xi=\delta_1^{\xi_1}\cdots\delta_m^{\xi_m}$. We order the elements of the tuple $(\delta^\xi x_i:\,(\xi,i)\in \Gamma_n(r))$ according to the canonical orderly ranking of the indeterminates $\delta^\xi x_i$; that is, \begin{equation}\label{ordercanonical}\delta^{\xi}x_i< \delta^{\zeta}x_j \iff \left(\sum \xi_k,i,\xi_1,\dots,\xi_m\right)<\left(\sum \zeta_k,j,\zeta_1,\dots,\zeta_m\right)\end{equation}where the ordering on the right-hand-side is the lexicographic one. \ \par Let $\U_r:=\U[\epsilon_1,\dots,\epsilon_m]/(\epsilon_1,\dots,\epsilon_m)^{r+1}$ where the $\epsilon_i$'s are indeterminates, and let $e:\U\to \U_r$ denote the ring homomorphism $$x\mapsto \sum_{\xi\in\Gamma_1(r)}\frac{1}{\xi_1!\cdots\xi_m!}\; \delta^\xi(x)\; \epsilon_1^{\xi_1}\cdots\epsilon_m^{\xi_m}.$$ We call $e$ the exponential $\U$-algebra structure of $\U_r$. To distinguish between the standard and the exponential algebra structure on $\U_r$, we denote the latter by $\U_r^e$. \end{FACT}\begin{FACT}{Definition}{}Given an algebraic variety $X$ the $r$-th \notion{prolongation} $\tau X$ is the algebraic variety given by the taking the $\U$-rational points of the Weil descent of $X\times_\U \U_r^e$ from $\U_r$ to $\U$. Note that the base change $V\times_\U \U_r^e$ is with respect to the exponential structure while the Weil descent is with respect to the standard $\U$-algebra structure. \end{FACT}For details and properties of prolongation spaces we refer to \cite[\S 2]{MoPiSc2008}; for a more general presentation see \cite{MooSca2010}. In particular, it is pointed out there that the prolongation $\tau_r X$ always exist when $X$ is quasi-projective (an assumption that we will adhere to later on). A characterising feature of the prolongation is that for each point $a\in X=X(\U)$ we have $\nabla_r(a)\in \tau_r X$. Thus, the map $\nabla_{r}:X\to \tau_r X$ is a differential regular section of $\pi_r:\tau_r X\to X$ the canonical projection induced from the residue map $\U_r\to \U$. We note that if $X$ is defined over the differential field $K$ then $\tau_r X$ is defined over $K$ as well. \ \par In fact, $\tau_r$ as defined above is a functor from the category of algebraic varieties over $K$ to itself, and the maps $\pi_r:\tau_r X\to X$ and $\nabla_r:X\to \tau_r X$ are natural. The latter means that for any morphism of algebraic varieties $f:X\to Y$ we get \begin{equation}\label{natural}f\circ\pi_{r,X}=\pi_{r,Y}\circ\tau_r f \quad \text{ and }\quad \tau_r f\circ\nabla_{r,X}=\nabla_{r,Y}\circ f. \end{equation}If $G$ is an algebraic group, then $\tau_r G$ also has the structure of an algebraic group. Indeed, since $\tau_r$ commutes with products, the group structure is given by $$\tau_r(*):\tau_r G\times\tau_r G\to \tau_r G$$ where $*$ denotes multiplication in $G$. Moreover, by the right-most equality in \eqref{natural}, the map $\nabla_r:G\to\tau_r G$ is an injective group homomorphism. Hence, $\nabla_r(G)$ is a differential algebraic subgroup of $\tau_r G$. \ \par Assume that $V$ is a differential variety which is given as a differential subvariety of an algebraic variety $X$. We define the $r$-th jet of $V$ to be the Zariski-closure of the image of $V$ under the $r$-th nabla map $\nabla_r:X\to \tau_r X$; that is, $$\jet_r V=\overline{\nabla_r(V)}^{\operatorname{Zar}}\subseteq \tau_r X.$$ The jet sequence of $V$ is defined as $(\jet_r V:r\geq 0)$. Note that this sequence determines $V$, indeed $$V=\{a\in X: \nabla_r(a)\in \jet_r V \text{ for all $r\geq 0$}\}.$$ In the case when $V$ is affine, say a Kolchin-closed subset of $\U^n$, and defined by differential polynomials of order at most $r$, then $$V=\{a\in \U^n: \nabla_r(a)\in \jet_r V\}.$$ \begin{FACT}{:Assumption.}{}From now on we assume, whenever necessary for the existence of jets, that our differential varieties are given as differential subvarieties of quasi-projective algebraic varieties. Of course, in the affine case this is always the case. It is worth noting that for connected differential algebraic groups this is also true. Indeed, by \cite[Corollary 4.2(ii)]{Pillay1997} every such group embeds into a connected algebraic group and the latter is quasi-projective by Chevalley's theorem. \end{FACT}\medskip \begin{FACT}{:Reminder on characteristic sets.}{}We recall some of the theory of characteristic set of prime differential ideals of the differential polynomial ring $K\{x\}$ with $x=(x_1,\dots,x_n)$. For a detailed reference we refer the reader to \cite[Chapters I and IV]{Kolchi1973}. Let $f\in K\{x\}$ be nonconstant. The leader of $f$, denoted $v_f$, is the highest ranking algebraic indeterminate that appears in $f$ (according to the canonical orderly ranking of the indeterminates $\delta^\xi x_i$, as in the equivalence \eqref{ordercanonical}). The leading degree of $f$, denoted $d_f$, is the degree of $v_f$ in $f$. The rank of $f$, denoted $\operatorname{rk}(f)$, is the pair $(v_f,d_f)$. The set of ranks is ordered lexicographically. The separant of $f$, denoted $S_f$, is the formal partial derivative of $f$ with respect to $v_f$. The initial of $f$, denoted $I_f$, is the leading coefficient of $f$ when viewed as a polynomial in $v_f$. Note that both $S_f$ and $I_f$ have lower rank than $f$. Given a finite subset $\Lambda\subseteq K\{x\}\setminus K$, we set $H_\Lambda:=\prod_{f\in \Lambda}I_fS_f$. \ \par One says that $g\in K\{x\}$ is weakly reduced with respect to $f\in K\{x\}$ if no proper derivative of $v_f$ appears in $g$; if in addition the degree of $v_f$ in $g$ is strictly less than $d_f$ we say that $g$ is reduced with respect to $f$. A set $\Lambda\subseteq R\{x\}$ is said to be autoreduced if for any two distinct elements of $\Lambda$ one is reduced with respect to the other. Autoreduced sets are always finite, and we always write them in nondecreasing order by rank. The canonical orderly ranking on autoreduced sets is defined as follows: $\{g_1,\dots,g_r\}<\{f_1,\dots,f_s\}$ means that either there is $i\leq r,s$ such that $\operatorname{rk}(g_j)=\operatorname{rk}(f_j)$ for $j<i$ and $\operatorname{rk}(g_i)<\operatorname{rk}(f_i)$, or $r>s$ and $\operatorname{rk}(g_j)=\operatorname{rk}(f_j)$ for $j\leq s$. \ \par While it is not generally the case that prime differential ideals of $K\{x\}$ are finitely generated as differential ideals (though they are finitely generated as radical differential ideals), something close is true; they are determined by certain autoreduced subsets called characteristic sets. More precisely, if $P\subseteq K\{x\}$ is a prime differential ideal then a \notion{characteristic set} $\Lambda$ of $P$ is a minimal autoreduced subset of $P$ with respect to the ranking defined above. These minimal sets always exist, and determine the ideal $P$ in the sense that \begin{equation}\label{equchar}P=\{f\in K\{x\}:\; H_\Lambda^\ell f\in[\Lambda] \text{ for some $\ell\geq 0$} \}. \end{equation}The differential ideal on the right-hand-side is commonly denoted by $[\Lambda]:H_\Lambda^\infty$, where $[\Lambda]$ is the differential ideal generated by $\Lambda$ in $K\{x\}$. \end{FACT}\begin{FACT}{Fact}{reduced}\cite[Proposition 2.7]{Tressl2005}Suppose $\Lambda$ is a characteristic set of a prime differential ideal $P\subseteq K\{x\}$. If $f\neq 0$ is reduced with respect $\Lambda$, then $f$ is not in $P$. \end{FACT}\noindent Given $I\subseteq \U\{x\}$, let $\V(I)$ denote the zeroes (as differential solutions) of the elements of $I$ in $\U^n$. For a characteristic set $\Lambda$ of a prime differential ideal $P\subseteq K\{x\}$, the description \eqref{equchar} implies $$\V(P)\setminus \V(H_\Lambda)=\V(\Lambda)\setminus \V(H_\Lambda)$$ A consequence of Fact \ref{reduced} is that $H_\Lambda\notin P$, and hence the above equality says that $\V(P)$ and $\V(\Lambda)$ agree off a proper Kolchin-closed subset, namely $\V(H_\Lambda)$. \ \par \medskip \ \par We will need a bit more notation. We let $K\{x\}_{\leq r}$ denote the set of differential polynomials over $K$ of order at most $r$. On the other hand, letting $(x^{\xi}_i:(\xi,i)\in\N^m\times\{1,\dots,n\})$ be a collection of new variables, we set $$K\{x\}_{\leq r}^{\pol}=K[x^\xi _i:(\xi,i)\in \Gamma_n(r)].$$ More generally, if $\SS$ is a set of differential polynomials in $K\{x\}_{\leq r}$, we set $$\SS^{\pol}=\{f^{\pol}\in K\{x\}_{\leq r}^{\pol}:f\in \SS\}$$ where $f^{\pol}$ denotes the polynomial obtained by replacing the variables $\delta^\xi x_i$ in $f$ for the algebraic variables $x^\xi_i$. We also let $\V_r(\SS)$ denote the (algebraic) zero set of $\SS^{pol}$ in $\U^{\alpha(n,r)}$ where recall that $\alpha(n,r)=|\Gamma_n(r)|$. \begin{Remark}\label{onjets}If $V$ is an affine differential variety defined by the radical differential ideal $I\subseteq K\{x\}$, then for each $r$ the jet $\jet_r V$ has defining ideal given by $(I\cap K\{x\}_{\leq r})^{\pol}$. In other words, $$\jet_r V=\V_r(I\cap K\{x\}_{\leq r}).$$ \end{Remark}\ \par We can now recall the uniform companion theory $\UC_m$ of differential fields of characteristic zero with $m$ commuting derivations. For any set $\SS\subseteq K\{x\}$ we let $\SS^{(r)}$ denote the set of all $\delta^\xi f$ of order at most $r$ with $f\in \SS$. \begin{FACT}{Definition}{UCtheory}\cite{Tressl2005}A differential field $K$ is a model of $\UC_m$ if the following condition is satisfied: for every characteristic set $\Lambda$ of a prime differential ideal of $K\{x\}$, if $\V_r(\Lambda^{(r)})\setminus\V_r(H_\Lambda)\subseteq \U^{\alpha(n,r)}$ has a smooth $K$-point for some $r$ with $\Lambda\subseteq K\{x\}_{\leq r}$, then the differential variety $$\V(\Lambda)\setminus\V(H_\Lambda)\subseteq \U^{n}$$ has a (differential) $K$-point. \end{FACT}\begin{FACT}{Remark}{axiomrosenfeld}The fact that the class of differential fields that satisfy the above condition is first-order axiomatizable in the language of differential rings is the content of \cite[\S4]{Tressl2005}. The proof there relies heavily on Rosenfeld's Lemma which gives an algebraic characterization of characteristic sets of prime differential ideals \cite[Chapter IV, \S9]{Kolchi1973}. In Section \ref{axiomsuc} below we present an alternative (algebraic-geometric) axiomatization. \end{FACT}Next we prove two properties of characteristic sets of prime differential ideals that seem to be well known but to our knowledge are not explicitly stated elsewhere. \begin{FACT}{Lemma}{charsets1}Let $\Lambda$ be a characteristic set of a prime differential ideal $P\subseteq K\{x\}$. If $\Lambda\subseteq K\{x\}_{\leq r}$, then $$P\cap K\{x\}_{\leq r}=(\Lambda^{(r)}):H_\Lambda^\infty$$ where $(\Lambda^{(r)})$ denotes the ideal generated by $\Lambda^{(r)}$ in $K\{x\}_{\leq r}$. \end{FACT}\begin{proof}Since $P=[\Lambda]:H_\Lambda^\infty$, the containment $\supseteq$ is clear. Now let $f\in P\cap K\{x\}_{\leq r}$. Then $f$ is weakly reduced with respect to $\Lambda^{(r)}$. By the differential division algorithm, there is $g$ reduced with respect to $\Lambda^{(r)}$ and $\ell$ such that $$H_\Lambda^\ell \; f-g\in (\Lambda^{(r)}).$$ But then, as $f$ is in $P$, we get that $g\in [\Lambda]:H_\Lambda^\infty$. So, as $g$ is also reduced with respect to $\Lambda$, Fact~\ref{reduced} implies that $g=0$, and hence $f\in (\Lambda^{(r)}):H_\Lambda^\infty$. \end{proof}\begin{FACT}{Remark}{usefulfactjets}Suppose that $V$ is a $K$-irreducible affine differential variety with corresponding prime differential ideal $P\subseteq K\{x\}$. Putting together Remark~\ref{onjets} and Lemma~\ref{charsets1}, we obtain that if $\Lambda$ is a characteristic set of $P$ and $\Lambda\subseteq K\{x\}_{\leq r}$, then the defining ideal of $\Jet_r V$ in $K\{x\}_{\leq r}^{\pol}$ is $((\Lambda^{(r)}):H_\Lambda^{\infty})^{\pol}$. As a result, $$\Jet_r V\setminus \V_r(H_\Lambda)=\V_r(\Lambda^{(r)})\setminus\V_r(H_\Lambda).$$ \end{FACT}Recall that a field $F$ is \notion{existentially closed} (e.c. for short) in a field extension $L$ if every algebraic variety over $F$ with an $L$-point contains a $F$-point. The notion of e.c. for differential fields is defined similarly (in the category differential varieties). \begin{FACT}{Proposition}{charsets2}Suppose $\Lambda$ is a characteristic set of a prime differential ideal $P\subseteq K\{x\}$ and assume that $\Lambda\subseteq K\{x\}_{\leq r}$. Let $S=K\{x\}/P$ and $h=H_\Lambda/P\in S$, and let $$R=K\{x\}_{\leq r}/(\Lambda^{(r)}):H_\Lambda^\infty.$$ Then, $S_h$ is polynomial algebra over $R_h$. Consequently, $\operatorname{Frac}(R)$ is e.c. in $\operatorname{Frac}(S)$ as fields. \end{FACT}\begin{proof}Let $\Theta(x)^{>r}$ denote the set of derivatives $\delta^\xi x_i$ of order strictly larger than $r$. We thus have $$\Theta(x)^{>r}=\Theta_1(x)\cup \Theta_2(x)$$ where $\Theta_1(x)$ are elements of $\Theta^{>r}(x)$ that are not derivatives of any leader $v_f$ with $f\in \Lambda$, and $\Theta_2(x)=\Theta(x)^{>r}\setminus \Theta_1(x)$. We write $\bar \theta(x)$ for the coset of $\theta(x)$ in $S$. We claim that the elements of $\bar \Theta_1(x)\subseteq S$ are algebraically independent over $R_h$. Indeed, if there were $\theta_1(x),\dots,\theta_s(x)\in\Theta_1(x)$ such that $f(\bar\theta_1(x),\dots,\bar\theta_s(x))=0$ for some nonzero $f\in R_h[t_1,\dots,t_s]$, then for some $\ell$ we would get $H_\Lambda^\ell(x) f(\theta_1(x),\dots,\theta_s(x))\in P$. By the differential division algorithm, we can find $g$ reduced with respect to $\Lambda$ and $\ell'$ such that $$H_\Lambda^{\ell'}f(\theta_1(x),\dots,\theta_s(x))-g\in [\Lambda],$$ but since $\Lambda$ has order $\leq r$ and the $\theta_i(x)$'s are of order $>r$ and not a derivative a leader of $\Lambda$, we get that $g\neq 0$. But then $P$ would contain a nonzero element, namely $g$, that is reduced with respect to $\Lambda$, this contradicts Fact\ref{reduced}. \ \par We now prove that all the elements of $\bar\Theta_2(x)$ are in $R_h[\bar \Theta_1(x)]$. The result follows from this, as $S_h=R_h[\bar\Theta(x)]$. Let $\theta(x)\in \Theta_2(x)$. By the differential division algorithm, there is $g$ reduced with respect to $\Lambda$ and $\ell$ such that $H_\Lambda^\ell \theta(x)-g\in [\Lambda]$. But then, as $g\in K\{x\}_{\leq r}[\Theta_1(x)]$, we get $\bar\theta(x)\in R_h[\bar\Theta_1(x)]$. \end{proof}\ \par The above properties of characteristic sets are at the core of the proof of the Structure Theorem for differential algebras from \cite{Tressl2002}. In the next section we will use the following slightly different version of this theorem. \begin{theorem}\label{structuretheorem}Let $B$ be a differential $K$-algebra that is differentially finitely generated and a domain. Then $B_h\cong_KA_h\otimes_K P$ where $A$ is a domain and a finitely generated $K$-algebra, $h\in A$, and $P$ is a polynomial algebra over $K$. \end{theorem}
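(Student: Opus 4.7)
\textbf{Proposal for Theorem \ref{structuretheorem}.} The plan is to reduce the statement to a direct application of Proposition \ref{charsets2}, with a small bit of bookkeeping to identify the objects. Since $B$ is differentially finitely generated and a domain, I would present $B$ as $K\{x\}/P$ for some prime differential ideal $P \subseteq K\{x\}$ with $x = (x_1,\ldots,x_n)$, and fix a characteristic set $\Lambda$ of $P$ together with an integer $r$ such that $\Lambda \subseteq K\{x\}_{\leq r}$.

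With this setup, set $S = K\{x\}/P$ (so $S = B$), let $h$ be the image of $H_\Lambda$ in $S$, and let
\[
A = K\{x\}_{\leq r}/\bigl((\Lambda^{(r)}):H_\Lambda^\infty\bigr).
\]
By Lemma \ref{charsets1}, the ideal $(\Lambda^{(r)}):H_\Lambda^\infty$ equals $P \cap K\{x\}_{\leq r}$, so the natural map $A \hookrightarrow S = B$ induced by the inclusion $K\{x\}_{\leq r} \hookrightarrow K\{x\}$ is injective. Since $B$ is a domain, so is $A$; and since $K\{x\}_{\leq r}$ is a polynomial ring in finitely many variables over $K$, $A$ is a finitely generated $K$-algebra. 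Moreover, $H_\Lambda \in K\{x\}_{\leq r}$ (because the initials and separants of elements of $\Lambda$ have rank strictly lower than their associated polynomials), so $h$ may be regarded as an element of $A$, and the notation $A_h$ is consistent with $B_h = S_h$.

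The main content now comes from Proposition \ref{charsets2}, which tells us that $S_h$ is a polynomial algebra over $R_h = A_h$ in the (possibly infinite) collection of indeterminates $\bar\Theta_1(x)$. Thus, setting $P := K[\bar\Theta_1(x)]$, the universal property of polynomial algebras gives an isomorphism of $K$-algebras
\[
B_h \;=\; S_h \;=\; A_h[\bar\Theta_1(x)] \;\cong_K\; A_h \otimes_K K[\bar\Theta_1(x)] \;=\; A_h \otimes_K P,
\]
which is exactly the desired conclusion.

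There is no substantive obstacle here beyond invoking the right previous results; the only small point to be careful about is verifying that the isomorphism $A \hookrightarrow B$ identifies $A_h$ with a subring of $B_h$ in the way required by Proposition \ref{charsets2} (which is why Lemma \ref{charsets1} enters). The proof is essentially a repackaging of that proposition in the form stated, cast in the classical ring-theoretic language of \cite{Tressl2002}.
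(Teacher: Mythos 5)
Your proposal is correct and follows essentially the same route as the paper: present $B$ as $K\{x\}/P$, invoke Proposition~\ref{charsets2} with $A=R$ and $P=K[\bar\Theta_1(x)]$, and read off the tensor decomposition. The extra bookkeeping you supply (via Lemma~\ref{charsets1}, that $A$ embeds in $B$ and is hence a domain, and that $h$ lies in $A$) is exactly the verification the paper leaves implicit.
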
\begin{proof}By the assumptions, $B$ is of the form $K\{x\}/P$ for some tuple of differential indeterminates $x=(x_1,\dots,x_n)$ and $P$ a prime differential ideal of $K\{x\}$. Let $S$, $R$, $h$ and $\bar\Theta_1$ be as in the proof of Proposition~\ref{charsets2}, with $\Lambda$ a characteristic set of $P$, then if we set $A=R$ and $P=K[\bar\theta_1]$ the proposition yields that $S_h\cong_KA_h\otimes P$ with the desired properties. \end{proof}\section{Differentially Large Fields}\labelx{sectionUCalgebraically}\noindent In the two applications of the differential Weil descent (see Sections \ref{firstapp} and \ref{secondapp}), we will use  several characterizations of $\uc_m$ given in Theorems~\ref{charac}~and~\ref{UCalgebraically} below. We first recall the definition of a large field, introduced by F. Pop in \cite{Pop1996}. \begin{FACT}{Definition}{}A field $F$ is said to be \notion{large} (or \textit{ample} in \cite[Rem. 16.12.3]{FriJar2008})if every irreducible affine algebraic variety over $F$ with a smooth $F$-point has a Zariski-dense set of $F$-points (Zariski-density is equivalent to saying that $F$ is existentially closed in $F(V)$ as fields). \end{FACT}Another equivalent formulation of largeness is that $F$ is existentially closed in the formal Laurent series field $F((t))$. Examples of large fields are pseudo algebraically closed fields, pseudo real closed fields, pseudo p-adically closed fields and the fraction field of any Henselian local ring, \cite{Pop2010}. Below, following Theorem~\ref{charac}, we will propose a notion of \notion{differentially large}. \ \par From now on a differential field will always mean a differential field in $m$ commuting derivations and of characteristic 0. \begin{Theorem}\label{charac}Let $K$ be a differential field and assume $K$ is large as a pure field. Then, the following are equivalent. \begin{enumerate}\item[(i)] $K$ is a model of $\UC_m$. \item[(ii)] If $L/K$ is a differential field extension such that $K$ is e.c. in $L$ as fields, then $K$ is e.c. in $L$ as differential fields. \item[(iii)] If $V$ is a $K$-irreducible differential variety such that for infinitely many $r\geq 0$ (equivalently: for all $r\geq 0$) $\jet_r(V)$ has a smooth $K$-point, then the set of $K$-rational points of $V$ is Kolchin dense over $K$ in $V$; in other words, for every  proper differential subvariety $W\subseteq V$ over $K$ there is a  differential $K$-point in $V\setminus W$. \end{enumerate}Further equivalent conditions may be found in \ref{UCalgebraically}, \ref{ondense}  and \ref{DiffLargeGeomAx}. \end{Theorem}\begin{proof}(i) $\Rightarrow$ (ii) Assume $V$ is a $K$-irreducible affine differential variety over $K$ with an $L$-point. We must show that it has a $K$-point. Without loss of generality, we may assume that $V$ has a Kolchin-generic $L$-point over $K$, call it $a$ (this can be achieved by replacing $V$ with the Kolchin-locus of the given $L$-point over $K$, if necessary). Let $P$ be the prime differential ideal of $K\{x\}$ defining $V$, and $\Lambda$ a characteristic set of $P$. Let $r$ be such that $\Lambda\subseteq K\{x\}_{\leq r}$. As $a$ is a Kolchin-generic point of $V$, $\nabla_r(a)$ is a Zariski-generic $L$-point of $\Jet_rV$. By Remark~\ref{usefulfactjets}, we have $$\Jet_r V\setminus \V_r(H_\Lambda)=\V_r(\Lambda^{(r)})\setminus\V_r(H_\Lambda),$$ and so the fact that $K$ is e.c. in $L$ as fields yields that $\V_s(\Lambda^{(r)})\setminus\V_s(H_\Lambda)$ has a smooth $K$-point. Thus, by definition of $\UC_m$, we get that $\V(\Lambda)\setminus\V(H_\Lambda)$ has a differential $K$-point. This point is also in $V$, as $V\setminus\V(H_\Lambda)=\V(\Lambda)\setminus\V(H_\Lambda)$. Note that this implication does \textit{not} use the largeness assumption on $K$. \ \par \medskip \ \par (ii) $\Rightarrow$ (iii) It suffices to consider the affine case. Let $L$ be the differential function field of $V$ over $K$; that is, $L$ is the fraction field of $K\{x\}/P$ where $P$ is the prime differential ideal defining $V$. By Remark~\ref{onjets}, for any $r$ we have that $\jet_r(V)=\V_r(P\cap K\{x\}_{\leq r})$. By assumption, for infinitely many values of $r$ we have that the latter has a smooth $K$-point; and by largeness of $K$, this means that $K$ is e.c. in the function field of $\V_r(P\cap K\{x\}_{\leq r})$ as fields. As $r$ can be taken to be arbitrarily large, we get that $K$ is e.c. in $L$ as fields. Hence, it is also e.c. as differential fields. As $V\setminus W$ contains an $L$-point, it also contains a $K$-point. \ \par \medskip \ \par (iii) $\Rightarrow$ (i) Let $\Lambda$ be a characteristic set of a prime differential ideal $P\subseteq K\{x\}$ such that $\V_r(\Lambda^{(r)})\setminus \V_r(H_\Lambda)$ has a smooth $K$-point for some $r$ with $\Lambda\subseteq K\{x\}_{\leq r}$. Let $V$ be the $K$-irreducible affine differential variety defined by the prime differential ideal $P$. Let $W=H_\Lambda$. We must show that $V\setminus W$ has a $K$-point (as the latter equals $\V(\Lambda)\setminus\V(H_\Lambda)$). So, it suffices to show that for all $s$ the jet $\jet_s(V)$ has a smooth $K$-point. \ \par Let $L$ be the fraction field of $K\{x\}/P$. We will first show that $K$ is e.c. in $L$ as fields. Let $F$ be the fraction field of $$R=K\{x\}_{\leq r}/(\Lambda^{(r)}):H^{\infty}.$$ Then, as $\V_r(\Lambda)\setminus\V_r(H_\Lambda)$ has a smooth $K$-point and $K$ is large, we have that $K$ is e.c. in $F$ (as fields). By Proposition~\ref{charsets2}, $(K\{x\}/P)_h$ with $h=H_\Lambda/P$ is a polynomial algebra over $R_h$, and so $F$ is e.c. in $L$. This shows that $K$ is e.c. in $L$ as fields. As $L$ contains a Kolchin-generic point of $V$, namely $a:=x/P$, for each $s\geq 0$ we have that $L$ contains a Zariski-generic point of $\jet_s V$, namely $\nabla_s(a)$. It follows that $\jet_s V$ has a smooth $K$-point for all $s$. \end{proof}\ \par We define differentially large fields as follows: \begin{FACT}{Definition}{difflarge}A differential field $K$ is said to be \notion[]{differentially large} if it is large as a field and satisfies any of the equivalent conditions of Theorem~\ref{charac}. \end{FACT}\begin{FACT}{Remark}{}The class of differentially large fields is first-order axiomatizable in the language of differential rings (with $m$ derivations). Indeed, the class of large is axiomatizable and by Remark~\ref{axiomrosenfeld} the axioms of $\uc_m$ are first-order. \end{FACT}\noindent Using Theorem \ref{structuretheorem} we get the following algebraic characterization of $\uc_m$: \begin{FACT}{Theorem}{UCalgebraically}Assume that the differential field $K$ is large as a field. The following are equivalent. \begin{enumerate}[(i)]\item $K$ is differentially large. \item For every differentially finitely generated $K$-algebra $S$ the following condition holds: \begin{enumerate}[label=THEN,leftmargin=10ex]\item[if]$S$ is a domain and $S\cong _K A\otimes _K P$ with $K$-algebras $A,P$ such that \begin{enumerate}[(a)]\item $A$ is a finitely generated $K$-algebra and a domain, \item $P$ is a polynomial algebra over $K$, and, \item there is a smooth $K$-rational point $A\lra K$, \footnote{Since $K$ is assumed to be large as a field, condition (c) is equivalent to saying that for all $h\in A\setminus \{0\}$ there is a $K$-rational point $A_h\lra K$.}\end{enumerate}\item[then]there is a differential $K$-rational point $S\lra K$. \end{enumerate}\end{enumerate}\ \par \end{FACT}\begin{proof}\noindent (i)$\Ra $(ii)Assume $K\models \uc_m$ and we know the if-condition of (ii). Because there is a $K$-rational point $A\to K$ and $K$ is large, $K$ is existentially closed in $\operatorname{Frac}(A)$. Moreover, as $S$ is a polynomial algebra over $A$, $\operatorname{Frac}(A)$ is e.c. in $\operatorname{Frac}(S)$ as fields, and hence $K$ is e.c. in $\operatorname{Frac}(S)$ as fields. By Theorem~\ref{charac}, $K$ is e.c. in $\operatorname{Frac}(S)$ also as differential fields. Thus, there is a differential $K$-rational point $S\to K$. \ \par \medskip\noindent (ii)$\Ra $(i)  Assume condition (ii). Let $\Lambda$ be a characteristic set of a prime differential ideal $P\subseteq K\{x\}$ such that $\V_r(\Lambda^{(r)})\setminus \V_r(H_\Lambda)$ has a smooth $K$-point for some $r$ with $\Lambda\subseteq K\{x\}_{\leq r}$. Let $S=(K\{x\}/P)_h$ where $h=H_\Lambda/P$, and let $$A=\left(K\{x\}_{\leq r}/(\Lambda^{(r)}):H_\Lambda^\infty\right)_h.$$ By Proposition~\ref{charsets2} and the proof of Theorem~\ref{structuretheorem}, $S\cong_K A\otimes_K P$ for a polynomial $K$-algebra $P$. The given smooth $K$-point of $\V_r(\Lambda^{(r)})\setminus \V_r(H_\Lambda)$ is a smooth $K$-rational point of $A\to K$. We thus get all the conditions in the hypothesis of (ii). This yields a differential $K$-rational point $S\to K$. This point lives in $V(\Lambda)\setminus \V(H_\Lambda)$ as desired. Note that this implication does \textit{not} use the largeness assumption. \ \par \end{proof}\ \par \noindent For existence of differentially large fields we refer to \cite[Theorem 6.2 (II)]{Tressl2005}: \begin{FACT}{Theorem}{LargeElemExtension}Let $K$ be a differential field that is large as a field. Then there is differential field extension $L$ of $K$ such that $L$ is differentially large and such that $L$ as a pure field is an elementary extension of the field $K$.\qed \end{FACT}\section{Algebraic Extensions of Differentially Large Fields and Minimal Differential Closures}\label{firstapp}\noindent We now prove that algebraic extensions of differentially large fields are again differentially large. This implies that the algebraic closure of a differentially large field is a model of $\DCF_{0,m}$. This yields new examples of differential fields with minimal differential closures. We carry on the notation and conventions from the previous section. \begin{FACT}{Theorem}{algextdifflarge}If $K$ is differentially large, then so is every algebraic extension (equipped with the induced derivations). \end{FACT}\begin{proof}Let $L/K$ be an algebraic extension. We must show that $L$ is differentially large. By Theorem \ref{charac}(ii), we may assume that $L/K$ is finite. Now let us assume the hypothesis and notation of part (ii) of Theorem \ref{UCalgebraically} with $L$ in place of $K$. We must show that there is a differential $L$-rational point $S\to L$. Consider the differential Weil descent $S'=W^{\operatorname{diff}}(S)$ from $L$ to $K$. Then, as the classical Weil descent commutes with tensors (cf. \ref{LeftAdjoint}\ref{LeftAdjointExact}), $S'\cong W(A)\otimes W(P)$ as $K$-algebras. Because Weil descent preserves smooth points (cf. \cite[Appendix 2]{Oester1984}), we get a smooth $K$-rational point $W(A)\to K$. Thus, as $K$ is differentially large, Theorem~\ref{UCalgebraically} yields a differential $K$-rational point $W^{\operatorname{diff}}(S)\to K$. By \ref{DiffWeilDescent}, we get a differential $L$-rational point $S\to L$, as desired. \end{proof}\begin{FACT}{Corollary}{mainresult}The algebraic closure of a differentially large field is differentially closed. In particular, if $K\models \operatorname{CODF}_m$, the theory of closed ordered differential fields in $m$ commuting derivations, then $K(i)\models \DCF_{0,m}$. \end{FACT}\noindent Previously known examples of differential fields with minimal differential closures are models of \CODF\ (which we denote as $\CODF_1$), see \cite{Singer1978b}, and fixed fields of models of $\DCF_{0,m}\operatorname{A}$, the theory differentially closed fields with a generic differential automorphism, see \cite{LeoSan2016}. The corollary delivers a vast variety of new differential fields with this property, namely all differentially large fields, see also \ref{LargeElemExtension}. A further application will be given in the next section. \section{Kolchin-Denseness of Rational Points in Differential Algebraic Groups}\label{secondapp}\noindent We present an application to a rationality question in differential algebraic groups. Note that if $F$ is a large field, then it follows that for any connected algebraic group $G$ over $F$ the set of $F$-rational points of $G$ is Zariski-dense. Indeed, $G$ is smooth as an algebraic variety and the identity $e\in G$ is a $K$-rational point, so largeness implies the Zariski-density. In this section, our goal is to prove \begin{Theorem}\label{ongroups}Assume $K$ is differentially large. If $G$ is a connected differential algebraic group over $K$, then the set of $K$-rational points of $G$, denoted $G(K)$, is Kolchin-dense in $G=G(\U)$. \end{Theorem}\ \par Theorem \ref{ongroups} will follow from the next proposition, which states a further characterization of differential largeness (compare with \ref{charac}(iii)). \begin{FACT}{Proposition}{ondense}Let $K$ be a differential field and assume $K$ is large as a field. Then $K$ is differentially large if and only if the following condition holds: \ \par If $V$ is a $K$-irreducible differential variety such that for infinitely many values of $r$ the jet $\jet_r V$ has a smooth $K$-point, then the set of $K$-rational points of $V$ is Kolchin-dense in $V$. \end{FACT}\begin{proof}The condition obviously implies \ref{charac}(iii). Conversely suppose $K$ is differentially large. By Corollary~\ref{mainresult}, $K^{\alg}$ is a model of $\DCF_{0,m}$, and so it suffices to show that $V(K)$ is dense in $V$ with respect to the Kolchin topology over $K^{\alg}$. \ \par \smallskip\noindent \claim $V$ is geometrically irreducible (i.e., $K^{\alg}$-irreducible) as a differential variety. \ \par \noindent \textit{Proof.}Since $V$ is $K$-irreducible as a differential variety, for all $r$ the jet $\jet_rV$ is $K$-irreducible\footnote{$K$-irreducibility in the Kolchin sense is equivalent to $K$-irreducibility in the Zariski by Kolchin's irreducibility theorem \cite[Ch. IV,\S17,Prop. 10, p. 200]{Kolchi1973}.}. By assumption, there are infinitely many values of $r$ for which $\jet_r V$ has a smooth $K$-point; and so, for all these $r$ the jet $\jet_rV$ is geometrically irreducible. Now the defining ideal of the jet $\jet_r V$ over $K^{\alg}$ is $$(I\cap K^{\alg}\{x\}_{\leq r})^{\pol}$$ where $I$ is the defining differential ideal of $V$ over $K^{\alg}$ (see Remark~\ref{onjets}). This implies that the ideal $I$ is prime, hence that $V$ is $K^{\alg}$-irreducible. \hfill$\diamond$ \ \par We now show that $V(K)$ is dense in $V$ with respect to the Kolchin topology over $K^{\alg}$. Let $Y$ be a proper differential subvariety of $V$ over $K^{\alg}$. Let $W$ be the Kolchin closure of $Y$ over $K$. Then $Y$ is a geometric component of $W$. \iflongversion\LongStart $Y$ is a geometric component of $W$ because we are able to work over $K^{\alg}$. This argument would not work over $\U$ \LongEnd\else\fi Since $V$ is geometrically irreducible by the claim, $W$ is a proper differential subvariety of $V$. By \ref{charac}(iii), there is a $K$-point in $V\setminus W\subseteq V\setminus Y$ as required. \end{proof}\ \par \ifoldversion\OldStart \begin{FACT}{Proposition}{ondense}Assume $K$ is differentially large. If $V$ is a $K$-irreducible differential variety such that for infinitely many values of $r$ the jet $\jet_r V$ has a smooth $K$-point, then the set of $K$-rational points of $V$ is Kolchin-dense in $V$. \end{FACT}\begin{proof}By Corollary~\ref{mainresult}, $K^{\alg}$ is a model of $\DCF_{0,m}$, and so it suffices to show that $V(K)$ is dense in $V$ with respect to the Kolchin topology over $K^{\alg}$. \ \par \smallskip\noindent \Claim 1 $V$ is geometrically irreducible (i.e., $K^{\alg}$-irreducible) as a differential variety. \ \par \noindent \textit{Proof.}Since $V$ is $K$-irreducible as a differential variety, for all $r$ the jet $\jet_rV$ is $K$-irreducible\footnote{$K$-irreducible in the Kolchin sense is equivalent to $K$-irreducibility in the Zariski sense by a famous theorem Kolchin.}. By assumption, there are infinitely many values of $r$ for which $\jet_r V$ has a smooth $K$-point; and so, for all these $r$ the jet $\jet_rV$ is geometrically irreducible. Now the defining ideal of the jet $\jet_r V$ over $K^{\alg}$ is $$(I\cap K^{\alg}\{x\}_{\leq r})^{\pol}$$ where $I$ is the defining differential ideal of $V$ over $K^{\alg}$ (see Remark~\ref{onjets}). This implies that the ideal $I$ is prime, hence that $V$ is $K^{\alg}$-irreducible. \hfill$\diamond$ \ \par \smallskip\noindent \Claim 2 $V(K)$ is dense in $V$ with respect to the Kolchin topology over $K$. \ \par \noindent \textit{Proof.}Let $W$ be a proper differential subvariety of $V$ defined over $K$.  We must show that there is a $K$-point in $V\setminus W$. The existence of such a point is precisely the content of Theorem \ref{charac}(iii). \hfill$\diamond$ \ \par We now show that $V(K)$ is dense in $V$ with respect to the Kolchin topology over $K^{\alg}$. Let $Y$ be a proper differential subvariety of $V$ over $K^{\alg}$. Let $W$ be the Kolchin closure of $Y$ over $K$. Then $Y$ is a geometric component of $W$. \iflongversion\LongStart $Y$ is a geometric component of $W$ because we are able to work over $K^{\alg}$. This argument would not work over $\U$ \LongEnd\else\fi Since $V$ is geometrically irreducible by claim 1, we must have that $W$ is a proper differential subvariety of $V$, but then by claim 2 there is a $K$-point in $V\setminus W$, and so also in $V\setminus Y$. \end{proof}\OldEnd\else\fi \ \par We conclude with the proof of Theorem \ref{ongroups}. \begin{proof}[Proof of Theorem \ref{ongroups}]By Proposition \ref{ondense}, it suffices to show that for infinitely values of $r$ the jet $\jet_r G$ has a smooth $K$-rational point. By \cite[Corollary 4.2(ii)]{Pillay1997}, $G$ embeds over $K$ into a connected algebraic group $H$ defined over $K$. As we saw in Section~\ref{prelimconven}, for each $r$, $\nabla_r G$ is a differential algebraic subgroup of $\tau_r H$. As a result, $\jet_r G$ is an algebraic subgroup of $\tau_r H$, and so $\jet_r G$ is smooth. If $e$ denotes the identity of $G$, which is a $K$-point, then, for each $r$, the $K$-point $\nabla_r(e)$ is a smooth point of $\jet_rG$. \end{proof}\begin{FACT}{Remark}{}If $G$ is a connected linear algebraic group over a field $F$ of characteristic zero (with no largeness assumptions), the Unirationality Theorem implies that the $F$-rational points of $G$ are Zariski-dense. It would be interesting to study the analogous question for linear \textit{differential} algebraic groups. We have not pursued this in this note. \end{FACT}\section{Algebraic-Geometric Axiomatization of Large Differential Fields}\label{axiomsuc}\noindent In this last section we present algebraic-geometric axioms for differentially large fields in the spirit of Pierce-Pillay \cite{PiePil1998}. The presentation follows the recent algebraic-geometric axiomatization of $\DCF_{0,m}$ established in \cite{LeoSan2018}. In particular, we will use the recently developed theory of differential kernels for fields with several commuting derivations from~\cite{GusLeS2016}. One significant difference with the arguments in \cite{LeoSan2018} is that there one only requires the existence of regular realizations of differential kernels, while here we need the existence of principal realizations, see Remark~\ref{principalec} and Fact~\ref{useful}. We carry on the notation and conventions from previous sections. \iflongversion\LongStart \textcolor{red}{Give better reference}\LongEnd\else\fi \ \par We will use two different orders $\leq$ and $\unlhd$ on $\N^m \times \{1,\dots,n\}$.  Given two elements $(\xi,i)$ and $(\tau,j)$ of $\N^m \times \{1,\dots,n\}$, we set $(\xi,i) \leq (\tau,j)$ if and only if $i = j$ and $\xi \leq \tau$ in the product order of $\N^m$. On the other hand, we set $(\xi,i) \unlhd (\tau,j)$ if and only if $$ (\sum \xi_k,i,\xi_1,\dots,\xi_{m}) \;\text{ is less than or equal to }\; (\sum \tau_k,j,\tau_1,\dots,\tau_m)$$ in the (left-)lexicographic order.  Note that if $x=(x_1,\ldots,x_{n})$ are differential indeterminates and we identify $(\xi,i)$ with $\delta^\xi x_i:=\delta_1^{\xi_1}\cdots\delta_m^{\xi_m}x_i$, then $\leq$ induces an order on the set of algebraic indeterminates given by $\delta^\xi x_i\leq \delta^\tau x_j$ if and only if $\delta^\tau x_j$ is a derivative of $\delta^\xi x_i$ (in particular this implies that $i=j$). On the other hand, the ordering $\unlhd$ induces the canonical orderly ranking on the set of algebraic indeterminates. \ \par We will look at field extensions of $K$ of the form \begin{equation}\label{extL}L:=K(a^\xi_i : (\xi,i) \in \Gamma_n(r))\end{equation}for some fixed $r\geq 0$. Here we use $a^\xi_i$ as a way to index the generators of $L$ over $K$. The element $(\tau,j)\in \N^m \times \{1,\dots,n\}$ is said to be a leader of $L$ if there is $\eta\in \N^m$ with $\eta\leq \tau$ and $\sum \eta_k\leq r$ such that $a^\eta_j$ is algebraic over $K(a^\xi_i : (\xi,i) \lhd (\eta,j))$, and a leader $(\tau,j)$ is a minimal leader of $L$ if there is no leader $(\xi,i)$ with $(\xi,i) < (\tau,j)$. We note that the notions of leader and minimal leader make sense even when we allow $r=\infty$. \ \par A (differential) kernel of length $r$ over $K$ is a field extension of the form $$L=K(a_i^{\xi}: (\xi,i)\in\Gamma_n(r))$$ such that there exist derivations $$D_k:K(a_i^\xi:(\xi,i)\in \Gamma_n(r-1))\to L$$ for $k=1,\dots,m$ extending $\delta_k$ and $D_ka_i^\xi=a_i^{\xi+{\bf k}}$ for all $(\xi,i)\in \Gamma_n(r-1)$, where ${\bf k}$ denotes the $m$-tuple whose $k$-th entry is one and zeroes elsewhere. \ \par Given a kernel $(L,D_1,\dots,D_k)$ of length $r$, we say that it has a prolongation of length $s\geq r$ if there is a kernel $(L',D_1',\dots,D_k')$ of length $s$ over $K$ such that $L'$ is a field extension of $L$ and each $D_k'$ extends $D_k$. We say that $(L,D_1,\dots,D_k)$ has a regular realization if there is a differential field extension $(M,\Delta'=\{\delta_1',\dots,\delta_m'\})$ of $(K,\Delta=\{\delta_1,\dots,\delta_m\})$ such that $M$ is a field extension of $L$ and $\delta_k'a_i^\xi=a_i^{\xi+{\bf k}}$ for all $(\xi,i)\in \Gamma_n(r-1)$ and $k=1,\dots ,m$. In this case we say that $g:=(a_1^{\bf 0},\dots,a_n^{\bf 0})$ is a regular realization of $L$. If in addition the minimal leaders of $L$ and those of the differential field $K\langle g\rangle$ coincide we say that $g$ is a principal realization of $L$. \begin{Remark}\label{principalec}Note that  if $g$ is a principal realization of the differential kernel $L$, then $L$ is existentially closed in $K\langle g \rangle$ as fields. Indeed, since the minimal leaders of $L$ and $K\langle g\rangle$ coincide, for every $(\xi,i)\in \N^m \times \{1,\dots,n\}$ we have that either $\delta^\xi g_i$ is in $L$ or it is algebraically independent from $K(\delta^\eta g_j:(\eta,j)\lhd (\xi,i))$. In other words, the differential ring generated by $g$ over $L$, namely $L\{ g\}$, is a polynomial ring over $L$. The claim follows. \end{Remark}\ \par In general, it is not the case that every kernel has a principal realization (not even regular). In \cite{GusLeS2016}, an upper bound $C_{r,m}^n$ was obtained for the length of a prolongation of a kernel that guarantees the existence of a principal realization. This bound depends only on the data $(r,m,n)$ and is constructed recursively as follows: \iflongversion\LongStart Recall that the Ackermann function $A:\mathbb N\times\mathbb N\to \mathbb N$ is a recursively defined function given as follows: $$ A(x,y) = \begin{cases} y + 1 & \text{ if } x = 0 \\ A(x-1,1) & \text{ if } x > 0 \text{ and } y = 0 \\ A(x-1,A(x,y-1)) & \text{ if } x,y > 0. \end{cases}$$ \LongEnd\else\fi $$C_{0,m}^1=0, \quad\; C_{r,m}^1=A(m-1,C_{r-1,m}^1), \quad \text{ and } \quad C_{r,m}^n=C_{C_{r,m}^{n-1},m}^1,$$ where $A(x,y)$ is the Ackermann function. For example, $$C_{r,1}^n=r, \quad\; C_{r,2}^n=2^n r \quad \text{ and }\quad C_{r,3}^1=3(2^r-1).$$ In \cite[Theorem 18]{GusLeS2016}, it is proved that \begin{FACT}{Fact}{useful}If a differential kernel $L=K(a_i^\xi:(\xi,i)\in\Gamma_n(r))$ of length $r$ has a prolongation of length $C_{r,m}^n$, then there is $r\leq h\leq C_{r,m}^n$ such that the differential kernel $K(a_i^\xi:(\xi,i)\in \Gamma_n(h))$ has a principal realization. \end{FACT}\begin{FACT}{Remark}{}Note that in the ordinary case $\Delta=\{\delta\}$ (i.e., $m=1$), we have $C_{r,1}^n=r$ by definition, and so the fact above shows that in this case every differential kernel has a principal realization (this is a classical result of Lando \cite{Lando1970}). \end{FACT}The fact above is the key to our algebraic-geometric axiomatization of differential largeness. But first we need some additional notation. For a given positive integer $n$, we let $$\alpha(n)=n\cdot\binom{C_{1,m}^n+m}{m}\quad\text{ and }\quad \beta(n)=n\cdot\binom{C_{1,m}^n-1+m}{m}.$$ We write $\pi:\U^{\alpha(n)}\to \U^{\beta(n)}$ for the projection onto the first $\beta(n)$ coordinates; i.e., setting $(x_i^{\xi})_{(\xi,i)\in\Gamma_n(C_{1,m}^n)}$ to be coordinates for $\U^{\alpha(n)}$ then $\pi$ is the map $$(x_i^{\xi})_{(\xi,i)\in\Gamma_n(C_{1,m}^n)}\mapsto (x_i^{\xi})_{(\xi,i)\in\Gamma(C_{1,m}^n-1)}.$$ It is worth noting here that $\alpha(n)=|\Gamma_n(C_{1,m}^n)|$ and $\beta(n)=|\Gamma_n(C_{1,m}^n-1)|$. We also use the projection $\psi:\U^{\alpha(n)}\to \U^{n\cdot(m+1)}$ onto the first $n\cdot(m+1)$ coordinates, that is, $$(x_i^{\xi})_{(\xi,i)\in\Gamma_n(C_{1,m}^n)}\mapsto (x_i^{\xi})_{(\xi,i)\in\Gamma_n(1)}.$$ Finally, we use the embedding $\phi:\U^{\alpha(n)}\to \U^{\beta(n)\cdot(m+1)}$ given by \begin{align*}(x_i^{\xi})_{(\xi,i)\in\Gamma_n(C_{1,m}^n)}\mapsto \biggl((x_i^{\xi})_{(\xi,i)\in\Gamma_n(C_{1,m}^n-1)},&(x_i^{\xi+{\bf 1}})_{(\xi,i)\in\Gamma_n(C_{1,m}^n-1)},\ldots\cr &\ldots,(x_i^{\xi+{\bf m}})_{(\xi,i)\in\Gamma_n(C_{1,m}^n-1)}\biggr). \end{align*}\ \par Recall from Section~\ref{prelimconven} that, given a Zariski-constructible set $X$ of $\U^n$, the first-prolongation of $X$ is denoted by $\tau X=\tau_1 X\subseteq \U^{n(m+1)}$. For the first-prolongation it is easy to give the defining equations: $\tau(X)$ is the Zariski-constructible set given by the conditions $$x\in X, \quad \text{ and }\quad  \sum_{i=1}^n\frac{\partial f_j}{\partial x_i}(x)\cdot y_{i,k} +f_j^{\delta_k}(x)=0\; \text{ for } 1\leq j\leq s, \; 1\leq k\leq m$$ where $f_1,\dots,f_s$ are generators of the ideal of polynomials over $\U$ vanishing at $X$, and each $f_j^{\delta_k}$ is obtained by applying $\delta_k$ to the coefficients of $f_j$. Note that $(a,\delta_1 a,\dots,\delta_m a)\in \tau X$ for all $a\in X$. Further, if $X$ is defined over the differential field $K$ then so is $\tau X$. \begin{Theorem}\label{DiffLargeGeomAx}Assume $K$ is a differential field that is large as a field. Then, $K$ is differentially large if and only \begin{enumerate}\item[($\diamondsuit$)] for every $K$-irreducible Zariski-closed set $W$ of $\U^{\alpha(n)}$ with a smooth $K$-point such that $\phi(W)\subseteq \tau(\pi(W))$, there is $a\in K^n$ with $(a,\delta_1 a,\dots,\delta_m a)\in \psi(W)$. \end{enumerate}\end{Theorem}\begin{proof}We will use the fact that a large and differential field $K$ is differentially large just if $K$ is existentially closed in every  differential field extension $L$ in which $K$ is existentially closed as a field (see Theorem \ref{charac}). The proof follows the strategy of \cite{LeoSan2018}, but here regular realizations are replaced by principal realizations with the appropriate adaptations. As the set up is technically somewhat intricate we give details. \ \par Assume $K$ is differentially large. Let $W$ be as in condition ($\diamondsuit$), we must find a point $a\in K^n$ such that $(a,\delta_1 a,\dots,\delta_m a)\in \psi(W)$. Let $b=(b_i^{\xi})_{(\xi,i)\in\Gamma_n(C_{1,m}^n)}$ be a Zariski-generic point of $W$ over $K$. Then $(b_i^{\xi})_{(\xi,i)\in\Gamma_n(C_{1,m}^n-1)}$ is a Zariski-generic point of $\pi(W)$ over $K$, and \begin{align*}\phi(b)&=\left((b_i^{\xi})_{(\xi,i)\in\Gamma_n(C_{1,m}^n-1)},(b_i^{\xi+{\bf 1}})_{(\xi,i)\in\Gamma_n(C_{1,m}^n-1)},\dots,(b_i^{\xi+{\bf m}})_{(\xi,i)\in\Gamma_n(C_{1,m}^n-1)}\right)\cr &\in \tau (\pi(W))\end{align*}\noindent By the standard argument for extending derivations (see \cite[Chapter~7, Theorem~5.1]{Lang2002}, for instance), there are derivations $$D'_k:K(b_i^\xi:(\xi,i)\in \Gamma_n(C_{1,m}^n-1))\to K(b_i^\xi:(\xi,i)\in\Gamma_n(C_{1,m}^n))$$ for $k=1,\dots,m$ extending $\delta_k$ and such that $D'_kb_i^\xi=b_i^{\xi+{\bf k}}$ for all $(\xi,i)\in\Gamma_n(C_{1,m}^n-1)$. Thus, $L'=K(b_i^\xi:(\xi,i)\in\Gamma_n(C_{1,m}^n))$ is a differential kernel over $K$ and, moreover, it is a prolongation of length $C_{1,m}^n$ of the differential kernel $L=K(b_i^\xi:(\xi,i)\in\Gamma_n(1))$ of length 1 with $D_k=D_k'|_{L}$. By Fact \ref{useful}, there is $r\leq h\leq C_{1,m}^n$ such that $L''=K(b_i^\xi:(\xi,i)\in\Gamma_n(h))$ has a principal realization; in particular, there is a differential field extension $(M,\Delta')$ of $(K,\Delta)$ containing $L''$ such that $\delta_k' b^{\bf 0}=b^{\bf k}$, where $b^{\bf 0}=(b_1^{\bf 0},\dots,b_n^{\bf 0})$ and similarly for $b^{\bf k}$. Then \begin{equation}\label{desire}(b^{\bf 0},\delta_1' b^{\bf 0}, \dots,\delta_m' b^{\bf 0})\in \psi(W)\end{equation}Now, since $W$ has a smooth $K$-point and $K$ is large, $K$ is e.c. in $L'$ as fields; in particular, $K$ is e.c. in $L''$ as fields. By Remark~\ref{principalec}, $L''$ is e.c. in the differential field $K\langle b^{\bf 0}\rangle$ as fields, and so $K$ is e.c. in $K\langle b^{\bf 0}\rangle$ as fields. Since $K$ is differentially large, the latter implies that $K$ is e.c. in $K\langle b^{\bf 0}\rangle$ as differential fields as well; and so, by \eqref{desire}, we can find the desired point $a$ in $K^n$. \ \par For the converse, assume $K$ is e.c. as field in a differential field extension $F$. We must show that $K$ is also e.c. in $F$ as differential field. Let $\rho(x)$ be a quantifier-free formula over $K$ (in the language of differential rings with $m$ derivations) in variables $x=(x_1,\dots,x_t)$ with a realization $c$ in $F$. We may write $$\rho(x)=\gamma(\delta^\xi x_i:(\xi,i)\in\Gamma_t(r))$$ where $\gamma((x^\xi)_{(\xi,i)\in\Gamma_t(r)})$ is a quantifier-free formula in the language of rings over $K$ for some $r$. If $r=0$, then $\rho$ is a formula in the language of rings, and so $\rho(x)$ has a realization in $K$ since $K$ is e.c. in $F$ as a field. Now assume $r>0$. Let $n:=t\cdot\binom{r-1+m}{m}$, $d:=(\delta^\xi c_i)_{(\xi,i)\in\Gamma_t(r-1)}$, and $$W:=\operatorname{Zar-loc}_K(\delta^\xi d_i:(\xi,i)\in\Gamma_n(C_{1,m}^n))\subseteq \U^{\alpha(n)}.$$ We have that $\phi(W)\subseteq \tau(\pi(W))$. Moreover, since $W$ has a smooth $F$-point (namely $(\delta^\xi d_i)_{(\xi,i)\in\Gamma_n(C_{1,m}^n)}$) and $K$ is e.c. in $F$ as fields, $W$ has a smooth $K$-point. By ($\diamondsuit$), there is $a=(a^\xi_i)_{(\xi,i)\in\Gamma_t(r-1)}\in K^n$ such that $(a,\delta_1 a,\dots,\delta_m a)\in \psi(W)$. This implies that $a^{\xi}_i=\delta^\xi a_i^{\bf 0}$ for all $(\xi,i)\in\Gamma_t(r-1)$. Thus, $$(\delta^\xi a_i^{\bf 0})_{(\xi,i)\in\Gamma_t(r)}\in\operatorname{Zar-loc}_K((\delta^\xi c_i)_{(\xi,i)\in\Gamma_t(r)})\subseteq \U^{t\cdot\binom{r+m}{m}},$$ and so, since $(\delta^\xi c_i)_{(\xi,i)\in\Gamma_t(r)}$ realizes $\gamma$, the point $(\delta^\xi a_i^{\bf 0})_{(\xi,i)\in\Gamma_t(r)}$ also realizes $\gamma$. Consequently, $K\models \rho(a^{\bf 0})$, as desired. \end{proof}\ \par It is worth noting that in the ordinary case ($m=1$) we get the values $\alpha(n)=2n$ and $\beta(n)=n$. Also, in this case, $\pi:\U^{2n}\to \U^n$ is just the projection onto the first $n$ coordinates, and $\psi, \phi:\U^{2n}\to \U^{2n}$ are both the identity map. We thus get the following \begin{Corollary}\label{ordinary}Assume that $(K,\delta)$ is an ordinary differential field of characteristic zero which is large as a field. Then, $(K,\delta)$ is differentially large if and only \begin{enumerate}\item[($\diamondsuit'$)] for every $K$-irreducible Zariski-closed set $W$ of $\U^{2n}$ with a smooth $K$-point such that $W\subseteq \tau_\delta(\pi(W))$, there is $a\in K^n$ with $(a,\delta a)\in W$. \end{enumerate}\end{Corollary}\begin{Remark} \ \begin{enumerate}[(i)]\item If $K$ is algebraically closed of characteristic 0, then Corollary~\ref{ordinary} yields the classical algebraic-geometric axiomatization of $\operatorname{DCF_0}$ given by Pierce and Pillay in \cite{PiePil1998}. \item If $K$ has a model complete theory $T$ in the language of fields and if $K$ is large, then Corollary~\ref{ordinary} yields a slight variation of the geometric axiomatization of $T_D$ given by Brouette, Cousins, Pillay and Point in \cite[Lemma 1.6]{BCPP2017}. \item For large and topological fields with a single derivation, an alternative description of differentially large fields with reference to the topology may be found in \cite{GuzRiv2006}. \end{enumerate}\end{Remark}\ \par \ifprivate \section{What next?}\begin{FACT}{:Some ad hoc questions directly following up the paper}{Question}\begin{enumerate}[(i)]\item If $K\prec L$ is an elementary extension of large fields such that $\trdeg(L/K)\geq \card(K)$ and if $d$ is a tuple of commuting derivations on $K$, can we find derivations $\partial$ on $L$ extending $d$ such that $(L,\partial)$ is differentially large? \item Let $(K,d)$ be a  differential field in $m$ commuting derivations, such that $K$ is large. Suppose $(K,d)$ is existentially closed in $(\qf K[[T_1,\ldots,T_m]],\partial)$ for all extensions $\partial$ of $d$ to $\qf K[[T_1,\ldots,T_m]]$. Is $(K,d)$ differentially large? \item If $K$ is differentially large are the constant fields existentially closed in $K$? \item (here $m=1$). Suppose $(K,d)$ is differentially large with constant field $C$ and assume that the pair of fields $(K,C)$ is $\aleph_0$-resplendent. Suppose $D\subseteq K^n$ is definable in every expansion of $(K,C)$ to a model of $\Th(K,d)$ by the same formula. Is $D$ definable in $(K,C)$? The proof for \CODF\ might go through - check! \item (here $m=1$). Let $K$ be a large subfield of $\overline{\Q}^{\alg}$. Is $K$ the constant field of a differentially large field? A variant to this question: Which differentially large subfields are there in the differential closure of $\Q$? Much weaker: Is there a proper     differentially large subfield of the differential closure of $\Q$? \end{enumerate}\end{FACT}\else\fi \ \par   \def\href#1#2{}{}\ \par \ifoldversion\OldStart \begin{FACT}{Proposition}{ondense}Assume $K$ is differentially large. If $V$ is a $K$-irreducible differential variety such that for infinitely many values of $r$ the jet $\jet_r V$ has a smooth $K$-point, then the set of $K$-rational points of $V$ is Kolchin-dense in $V$. \end{FACT}\begin{proof}By Corollary~\ref{mainresult}, $K^{\alg}$ is a model of $\DCF_{0,m}$, and so it suffices to show that $V(K)$ is dense in $V$ with respect to the Kolchin topology over $K^{\alg}$. \ \par \smallskip\noindent \Claim 1 $V$ is geometrically irreducible (i.e., $K^{\alg}$-irreducible) as a differential variety. \ \par \noindent \textit{Proof.}Since $V$ is $K$-irreducible as a differential variety, for all $r$ the jet $\jet_rV$ is $K$-irreducible\footnote{$K$-irreducible in the Kolchin sense is equivalent to $K$-irreducibility in the Zariski sense by a famous theorem Kolchin.}. By assumption, there are infinitely many values of $r$ for which $\jet_r V$ has a smooth $K$-point; and so, for all these $r$ the jet $\jet_rV$ is geometrically irreducible. Now the defining ideal of the jet $\jet_r V$ over $K^{\alg}$ is $$(I\cap K^{\alg}\{x\}_{\leq r})^{\pol}$$ where $I$ is the defining differential ideal of $V$ over $K^{\alg}$ (see Remark~\ref{onjets}). This implies that the ideal $I$ is prime, hence that $V$ is $K^{\alg}$-irreducible. \hfill$\diamond$ \ \par \smallskip\noindent \Claim 2 $V(K)$ is dense in $V$ with respect to the Kolchin topology over $K$. \ \par \noindent \textit{Proof.}Let $W$ be a proper differential subvariety of $V$ defined over $K$.  We must show that there is a $K$-point in $V\setminus W$. The existence of such a point is precisely the content of Theorem \ref{charac}(iii). \hfill$\diamond$ \ \par We now show that $V(K)$ is dense in $V$ with respect to the Kolchin topology over $K^{\alg}$. Let $Y$ be a proper differential subvariety of $V$ over $K^{\alg}$. Let $W$ be the Kolchin closure of $Y$ over $K$. Then $Y$ is a geometric component of $W$. \iflongversion\LongStart $Y$ is a geometric component of $W$ because we are able to work over $K^{\alg}$. This argument would not work over $\U$ \LongEnd\else\fi Since $V$ is geometrically irreducible by claim 1, we must have that $W$ is a proper differential subvariety of $V$, but then by claim 2 there is a $K$-point in $V\setminus W$, and so also in $V\setminus Y$. \end{proof}\OldEnd\else\fi \ \par \ifoldversion\OldStart \begin{FACT}{Proposition}{ondense}Assume $K$ is differentially large. If $V$ is a $K$-irreducible differential variety such that for infinitely many values of $r$ the jet $\jet_r V$ has a smooth $K$-point, then the set of $K$-rational points of $V$ is Kolchin-dense in $V$. \end{FACT}\begin{proof}By Corollary~\ref{mainresult}, $K^{\alg}$ is a model of $\DCF_{0,m}$, and so it suffices to show that $V(K)$ is dense in $V$ with respect to the Kolchin topology over $K^{\alg}$. \ \par \smallskip\noindent \Claim 1 $V$ is geometrically irreducible (i.e., $K^{\alg}$-irreducible) as a differential variety. \ \par \noindent \textit{Proof.}Since $V$ is $K$-irreducible as a differential variety, for all $r$ the jet $\jet_rV$ is $K$-irreducible\footnote{$K$-irreducible in the Kolchin sense is equivalent to $K$-irreducibility in the Zariski sense by a famous theorem Kolchin.}. By assumption, there are infinitely many values of $r$ for which $\jet_r V$ has a smooth $K$-point; and so, for all these $r$ the jet $\jet_rV$ is geometrically irreducible. Now the defining ideal of the jet $\jet_r V$ over $K^{\alg}$ is $$(I\cap K^{\alg}\{x\}_{\leq r})^{\pol}$$ where $I$ is the defining differential ideal of $V$ over $K^{\alg}$ (see Remark~\ref{onjets}). This implies that the ideal $I$ is prime, hence that $V$ is $K^{\alg}$-irreducible. \hfill$\diamond$ \ \par \smallskip\noindent \Claim 2 $V(K)$ is dense in $V$ with respect to the Kolchin topology over $K$. \ \par \noindent \textit{Proof.}Let $W$ be a proper differential subvariety of $V$ defined over $K$.  We must show that there is a $K$-point in $V\setminus W$. The existence of such a point is precisely the content of Theorem \ref{charac}(iii). \hfill$\diamond$ \ \par We now show that $V(K)$ is dense in $V$ with respect to the Kolchin topology over $K^{\alg}$. Let $Y$ be a proper differential subvariety of $V$ over $K^{\alg}$. Let $W$ be the Kolchin closure of $Y$ over $K$. Then $Y$ is a geometric component of $W$. \iflongversion\LongStart $Y$ is a geometric component of $W$ because we are able to work over $K^{\alg}$. This argument would not work over $\U$ \LongEnd\else\fi Since $V$ is geometrically irreducible by claim 1, we must have that $W$ is a proper differential subvariety of $V$, but then by claim 2 there is a $K$-point in $V\setminus W$, and so also in $V\setminus Y$. \end{proof}\OldEnd\else\fi \ \par \ifoldversion\OldStart \begin{FACT}{Proposition}{ondense}Assume $K$ is differentially large. If $V$ is a $K$-irreducible differential variety such that for infinitely many values of $r$ the jet $\jet_r V$ has a smooth $K$-point, then the set of $K$-rational points of $V$ is Kolchin-dense in $V$. \end{FACT}\begin{proof}By Corollary~\ref{mainresult}, $K^{\alg}$ is a model of $\DCF_{0,m}$, and so it suffices to show that $V(K)$ is dense in $V$ with respect to the Kolchin topology over $K^{\alg}$. \ \par \smallskip\noindent \Claim 1 $V$ is geometrically irreducible (i.e., $K^{\alg}$-irreducible) as a differential variety. \ \par \noindent \textit{Proof.}Since $V$ is $K$-irreducible as a differential variety, for all $r$ the jet $\jet_rV$ is $K$-irreducible\footnote{$K$-irreducible in the Kolchin sense is equivalent to $K$-irreducibility in the Zariski sense by a famous theorem Kolchin.}. By assumption, there are infinitely many values of $r$ for which $\jet_r V$ has a smooth $K$-point; and so, for all these $r$ the jet $\jet_rV$ is geometrically irreducible. Now the defining ideal of the jet $\jet_r V$ over $K^{\alg}$ is $$(I\cap K^{\alg}\{x\}_{\leq r})^{\pol}$$ where $I$ is the defining differential ideal of $V$ over $K^{\alg}$ (see Remark~\ref{onjets}). This implies that the ideal $I$ is prime, hence that $V$ is $K^{\alg}$-irreducible. \hfill$\diamond$ \ \par \smallskip\noindent \Claim 2 $V(K)$ is dense in $V$ with respect to the Kolchin topology over $K$. \ \par \noindent \textit{Proof.}Let $W$ be a proper differential subvariety of $V$ defined over $K$.  We must show that there is a $K$-point in $V\setminus W$. The existence of such a point is precisely the content of Theorem \ref{charac}(iii). \hfill$\diamond$ \ \par We now show that $V(K)$ is dense in $V$ with respect to the Kolchin topology over $K^{\alg}$. Let $Y$ be a proper differential subvariety of $V$ over $K^{\alg}$. Let $W$ be the Kolchin closure of $Y$ over $K$. Then $Y$ is a geometric component of $W$. \iflongversion\LongStart $Y$ is a geometric component of $W$ because we are able to work over $K^{\alg}$. This argument would not work over $\U$ \LongEnd\else\fi Since $V$ is geometrically irreducible by claim 1, we must have that $W$ is a proper differential subvariety of $V$, but then by claim 2 there is a $K$-point in $V\setminus W$, and so also in $V\setminus Y$. \end{proof}\OldEnd\else\fi \end{document}